\definecolor{black}{rgb}{0.0, 0.0, 0.0}
\definecolor{red}{rgb}{1.0, 0.5, 0.5}
\title[   ]{Dynamics of a spatially homogeneous Vicsek model for oriented particles on the plane}
\author[Kang]{Moon-Jin Kang}
\address[Moon-Jin Kang]{\newline Department of Mathematics, \newline The University of Texas at Austin, Austin, TX 78712, USA}
\email{moonjinkang@math.utexas.edu}
\author[Morales]{Javier Morales}
\address[Javier Morales]{\newline Department of Mathematics, \newline The University of Texas at Austin, Austin, TX 78712, USA}
\email{jmorales@math.utexas.edu}
\newtheorem{theorem}{Theorem}[section]
\newtheorem{lemma}{Lemma}[section]
\newtheorem{remark}{Remark}[section]
\newcommand{\bbr}{\mathbb R}
\newcommand{\bbs}{\mathbb S}
\newcommand{\bbp} {\mathbb P}
\numberwithin{figure}{section}
\newcommand{\beq}{\begin{equation}}
\newcommand{\eeq}{\end{equation}}
\newcommand{\bsp}{\begin{split}}
\newcommand{\esp}{\end{split}}
\def\eps{\varepsilon }
\newcommand\adots{\mathinner{\mkern2mu\raise1pt\hbox{.}
\mkern3mu\raise4pt\hbox{.}\mkern1mu\raise7pt\hbox{.}}}
\def\charf {\mbox{{\text 1}\kern-.30em {\text l}}}
\begin{document}

\date{\today}


\thanks{\textbf{Acknowledgment.} The authors thank Prof. Alessio Figalli for valuable comments.
}

\begin{abstract}
We consider a spatially homogeneous Kolmogorov-Vicsek model
in two dimensions, which describes the alignment dynamics of
self-driven stochastic particles that move on the plane at a constant
speed, under space-homogeneity. 
In \cite{F-K-M}, Alessio Figalli and the authors have shown the existence of global
weak solutions for this two-dimensional model.
However, no time-asymptotic behavior has been obtained for the two-dimensional case, due to the failure of the celebrated Bakery and Emery condition for the logarithmic Sobolev inequality. We prove exponential convergence (with quantitative rate) of the weak solutions  towards a Fisher-von Mises distribution, using a new condition for the logarithmic Sobolev inequality.
\end{abstract}
\maketitle \centerline{\date}

\section{Introduction and Main results}
Recently, the stochastic Vicsek model has
received extensive attention in 
mathematical topics such as mean-field limits, 
hydrodynamic limits, 
and phase transitions \cite{A-H,B-C-C,C-K-J-R-F, D-F-L-2, D-F-L-1,D-M,F-L,G-C}. In this article, 
we study a time-asymptotic behavior of the so-called 
Kolmogorov-Vicsek model in two space dimensions. Such a model is governed by a nonlinear, nonlocal Fokker-Planck equation, which describes self-propelled stochastic particles moving on a plane with unit speed:
\begin{align}
\begin{aligned}\label{main} 
&\partial_{t}\rho= \Delta_{\omega}\rho-\nabla_{\omega}\cdot\Big(\rho\,\bbp_{\omega^{\perp}}\Omega_{\rho}\Big),\\
&\Omega_{\rho}=\frac{J_{\rho}}{|J_{\rho}|}, \quad J_{\rho}=\int_{\bbs^{1}}\omega\,\rho\, d\omega,
\end{aligned}
\end{align}
where $\rho(t,\omega)$ is a probability 
density function at time $t$ with direction $\omega\in\bbs^{1}$ 
(unit circle of $\bbr^{2}$), and the operators 
$\nabla_{\omega}$ and $\Delta_{\omega}$ denote the gradient and 
the Laplace-Beltrami operator on the circle $\bbs^{1}$.
The force field $\bbp_{\omega^{\perp}}\Omega_{\rho}$ denotes 
the projection of the unit vector $\Omega_{\rho}$ onto the normal 
plane to $\omega$, i.e., $\bbp_{\omega^{\perp}}\Omega_{\rho}:=(Id-\omega\otimes\omega)\Omega_{\rho}$, which describes the mean-field force that governs 
the orientational interaction of self-driven particles by aligning them 
with the direction $\Omega_{\rho}$ determined by the flux $J_{\rho}$.

This model \eqref{main} is a spatially homogeneous version of the kinetic Kolmogorov-Vicsek model \cite{D-M,Ga-Ka}, which was formally derived by Degond and Motsch \cite{D-M} as a mean-field limit of the discrete Vicsek model \cite{A-H, C-K-J-R-F, G-C, Vicsek} with stochastic dynamics. Bolley, Ca$\tilde{\mbox{n}}$izo and Carrillo \cite{B-C-C} have rigorously justified the mean-field limit 
when the unit vector $\Omega_{\rho}$ in the force term of 
\eqref{main}  is replaced by a more regular vector-field. As a study on phase transition, Degond, Frouvelle and Liu \cite{D-F-L-1} provided a complete and rigorous description of phase transitions when $\Omega_{\rho}$
is replaced by $\nu(|J_{\rho}|)\Omega_{\rho}$, and there is a 
noise intensity $\tau(|J_{\rho}|)$ 
in front of $\Delta_{\omega}\rho$, 
where the functions $\nu$ and $\tau$  are Lipschitz, bounded,  and satisfy that $|J_{\rho}|\mapsto \nu(|J_{\rho}|)/|J_{\rho}|$ and $|J_{\rho}|\mapsto \tau(|J_{\rho}|)$. It turns out that their modification leads to the appearance of phase transitions such as the number and nature of equilibria, stability, convergence rate, phase diagram and hysteresis, which depend on the ratio between $\nu$ and $\tau$. We see that the assumptions of $\nu$ remove the singularity of $\Omega_{\rho}$ because $\nu(|J_{\rho}|)\Omega_{\rho}\to 0$ as $|J_{\rho}|\to 0$. 
This phase transition problem has been studied as well in \cite{A-H, C-K-J-R-F, D-F-L-2,D-F-L-1, F-L, G-C}.

As a result on well-posedness of kinetic Vicsek model, Gamba and 
the first author \cite{Ga-Ka}
recently proved the existence and uniqueness
of weak solutions to the spatially inhomogeneous
Vicsek model under the a priori assumption of
positivity of momentum, without handling the stability issues,
whose difficulty is mainly coming from the facts that
the momentum is not conserved and 
no dissipative energy functional has been found for the model. For a study for its numerical scheme, we refer to \cite{G-H-M}. Frouvelle and Liu \cite{F-L} have shown the well-posedness in the spatially homogeneous case with a more regular vector-field, i.e., $\bbp_{\omega^{\perp}}J_{\rho}$ instead of $\bbp_{\omega^{\perp}}\Omega_{\rho}$. 
Concerning studies on hydrodynamic descriptions of kinetic Vicsek model, we refer to \cite{D-F-L-2, D-F-L-1, D-M, D-M-2, D-Y, F};
see also \cite{Bo-Ca, D-D-M, D-F-M, H-J-K} for other related studies.

In \cite{F-K-M}, the authors have shown the global-in-time existence of weak solutions to the 
two-dimensional model \eqref{main}, and short-time stability in 2-Wasserstein distance, whereas they have proved the convergence of the weak solutions towards Fisher-von Mises distribution in the higher-dimensional case for \eqref{main}, that is, the space dimension is bigger than two. In order to show the exponential convergence to steady state, they have used the logarithmic Sobolev inequality based on the celebrated criterion of Bakry and Emery \cite{B-E} (see Section 2), which requires the constraint on the space dimension. 

In this article, we prove that the weak solution of \eqref{main} exponentially converges towards the Fisher-von Mises distribution as a stationary state.\\
Notice that since 
$\bbp_{\omega^{\perp}}\Omega_{\rho}=\nabla_{\omega}(\omega\cdot\Omega_{\rho})$, the equation \eqref{main} can be rewritten in the form:
\beq\label{main-0}
\partial_{t}\rho= \nabla_{\omega}\cdot\Big(\rho\,\nabla_{\omega}\,(\log\rho-\omega\cdot\Omega_{\rho})\Big),
\eeq
which can be regarded as a gradient flow with respect to the Wasserstein distance of the free energy functional 
\beq\label{free-e}
\mathcal{E}(\rho)=\int_{\bbs^{1}}\rho\log\rho \hspace{1mm} d\omega-|J_{\rho}|.
\eeq
We can easily see that the equilibrium states of \eqref{main-0} have the form of the Fisher-von Mises distributions: for any given $\Omega\in\bbs^{1}$,
these are given by
\[
M_{\Omega}(\omega):=C_M e^{\omega\cdot\Omega},
\]
where $C_M$ is the following positive constant
\begin{equation}
\label{eq:CM}
C_M=\frac{1}{\int_{\bbs^{1}}e^{\omega\cdot\Omega}\,d\omega},
\end{equation}
so that $M_{\Omega}$ is a probability density function in $\mathcal{P}(\mathbb{S}^{1});$ the space of probability measures in $\mathbb{S}^{1}$.

As in \cite{F-K-M}, we will show the time-asymptotic behavior using the relative entropy with respect to the Fisher-von Mises distribution $M_{\Omega_{\rho}}$ defined by
\[
H(\rho | M_{\Omega_{\rho}}):=\int_{\mathbb{S}^{1}}\log \Big(\frac{\rho}{M_{\Omega_{\rho}}}\Big) \rho \hspace{1mm} d\omega,
\]
which actually control the $L^1$-distance between $\rho$ and $M_{\Omega_{\rho}}$. 
We  show that $H(\rho | M_{\Omega_{\rho}})$ decays exponentially. The proof of such a decay relies on two main 
estimates. The first one is a localized version of the logarithmic Sobolev inequality which we prove in section 2.
The second one is a growth control of the dissipation 
given in Lemma \ref{lem-DC}. The main heuristic idea for such control  is coming from the final dimensional identity
\[
\frac{d}{dt}\langle\nabla_{\gamma(t)}E,\nabla_{\gamma(t)}E\rangle=-2\langle\nabla_{\gamma(t)}^{2}E\nabla_{\gamma(t)}E,\nabla_{\gamma(t)}E\rangle,
\hspace{1em}\forall t\geq 0,
\]
which holds for any $E$ in $\mathcal{C}^{2}(\mathbb{R}^n)$ and any $\gamma$ in $\mathcal{C}^{1}([0,\infty),\mathbb{R}^{n})$
satisfying $\dot{\gamma}=-\nabla_{\gamma}E.$\\
Indeed, in the setting of the formal Calculus introduced by Otto and Villani (see \cite[Section 3]{Otto-Villani}),
such a identity corresponds to the connection between \eqref{claim-0} and the formal expression for the Wasserstein Hessian of the free energy \eqref{free-e} appeared in \cite[Lemma 3.1]{F-K-M}. 

We now state the main results of the paper.
\begin{theorem} \label{thm-converge}
Let $\rho_{0}\in\mathcal{P}(\bbs^{1})$ be an initial probability measure satisfying
\beq\label{ini-assume}
\rho_0>0,\quad |J_{\rho_{0}}|>0, \quad \int_{\bbs^{1}}\rho_0\log\rho_0 \,d\omega<\infty.
\eeq
Then, the equation \eqref{main} has a unique weak solution $\rho\in L_{loc}^{2}([0,\infty),W^{1,1}(\bbs^{1}))$, which is weakly continuous in time, and satisfies time-asymptotic behaviors as follows:\\
1) There exist a constant unit vector $\Omega_{\infty} \in \bbs^{1}$ such that for all $t>0$,
\beq\label{thm-ineq}
 \|\rho_t -  M_{\Omega_{\infty}}\|_{L^1(\bbs^{1})} \lesssim \Big(\int_{\bbs^{1}}\rho_0\log\rho_0\, d\omega+1-\log C_{M}\Big) e^{-Bt}.
\eeq
Here, the exponential decay rate $B$ depends on initial data as
\[
B=\bigg(\frac{|J_{\rho_0}|e^{-T_{0}}}{2|J_{\rho_0}|e^{-T_{0}}+2} \bigg( {\rm \exp}\bigg[\bigg(2+\frac{2}{|J_{\rho_{0}}|e^{-T_{0}}}\bigg)T_{0}\bigg] -1\bigg) + C_*{\rm \exp}\bigg[\bigg(2+\frac{2}{|J_{\rho_{0}}|e^{-T_{0}}}\bigg)T_{0}\bigg]   \bigg)^{-1},
\]
where $T_0>0$ is some constant depending on initial data with $T_0\lesssim H(\rho_0 | M_{\Omega_{\rho_0}})$, and
\[
C_{*}:= \frac{2\pi^{2}e^{2(1+|\log C_{M}|)}(1+\frac{1}{15}\varepsilon_{*})}{1-\frac{7}{6}\varepsilon_{*}}, 
\]
for any fixed positive constant $\varepsilon_{*}\leq \frac{1}{10}.$\\
2) For any $t\geq T_{0}$, we have
\[
\|\rho_t -  M_{\Omega_{\infty}}\|_{L^1(\bbs^{1})} \lesssim \Big(\int_{\bbs^{1}}\rho_0\log\rho_0\, d\omega+1-\log C_{M}\Big) e^{-C_*^{-1}(t-T_{0})}.
\]
3) For any $\varepsilon>0$, there exists $\delta>0$ such that if  $H(\rho_{0}|M_{\Omega_{\rho_{0}}})<\delta$, then for all $t>0$,
\[
\|\rho_t -  M_{\Omega_{\infty}}\|_{L^1(\bbs^{1})} \lesssim \Big(\int_{\bbs^{1}}\rho_0\log\rho_0\, d\omega+1-\log C_{M}\Big) 
\exp{\Big[-\Big(\frac{1}{2\pi^{2}e^{2(1+|\log C_{M}|)}}-\eps\Big)t\Big]}.
\]
\end{theorem}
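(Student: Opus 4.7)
The plan is to use the relative entropy $H(\rho_t\,|\,M_{\Omega_{\rho_t}})$ as a Lyapunov functional and couple it to the localized logarithmic Sobolev inequality of Section~2. A direct computation using the gradient-flow formulation \eqref{main-0}, together with the optimality $\Omega_{\rho}=J_{\rho}/|J_{\rho}|$ (which kills the contribution from the time variation of the reference measure), will produce
\[
\frac{d}{dt}H(\rho_t\,|\,M_{\Omega_{\rho_t}})=-D(\rho_t),\qquad D(\rho):=\int_{\bbs^{1}}\rho\,|\nabla_{\omega}\log(\rho/M_{\Omega_{\rho}})|^{2}\,d\omega.
\]
The conclusion then reduces to closing the functional inequality $H\le C_{*}D$; the localized log-Sobolev inequality of Section~2 supplies exactly this bound, but only when $\rho$ lies in a quantitative neighborhood of a Fisher-von Mises distribution. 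This restriction is forced on us by the failure of the Bakry-Emery criterion in dimension two and is the essential difficulty of the theorem.

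To overcome it, I would run a two-phase argument. In the first phase $[0,T_{0}]$ one cannot yet apply the localized inequality, but the identity $\int_{0}^{\infty}D(\rho_s)\,ds=H(\rho_{0}\,|\,M_{\Omega_{\rho_0}})$ combined with the growth control on $D$ provided by Lemma~\ref{lem-DC} will produce a time $T_{0}\lesssim H(\rho_{0}\,|\,M_{\Omega_{\rho_0}})$ at which $\rho_{T_{0}}$ has entered the localization region; equivalently, at which $|J_{\rho_{T_{0}}}|$ is close enough to~$1$ to trigger the inequality of Section~2. The explicit shape of $B$ is exactly the bookkeeping of this step: the factor $\exp[(2+2/(|J_{\rho_{0}}|e^{-T_{0}}))T_{0}]$ is picked up when one integrates the growth inequality for $D$ across $[0,T_{0}]$, while the coefficient $C_{*}$ records the value of the localized log-Sobolev constant once the slack parameter $\varepsilon_{*}\le 1/10$ has been fixed. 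For $t\ge T_{0}$, the inequality $H\le C_{*}D$ is now in force, and Gr\"onwall's lemma yields
\[
H(\rho_t\,|\,M_{\Omega_{\rho_t}})\le H(\rho_{T_{0}}\,|\,M_{\Omega_{\rho_{T_{0}}}})\,e^{-(t-T_{0})/C_{*}},
\]
which is assertion~2). Concatenating the two phases and absorbing the ratio $H(\rho_{0})/H(\rho_{T_{0}})$ into the decay rate delivers assertion~1) with the composite constant $B$, while assertion~3) is the special case in which the smallness hypothesis on $H(\rho_{0}\,|\,M_{\Omega_{\rho_0}})$ already places $\rho_{0}$ inside the localization region, so that one may take $T_{0}=0$ and the rate collapses to the bare log-Sobolev constant modulo the loss~$\varepsilon$.

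It then remains to upgrade this relative entropy decay to the $L^{1}$ bound against a \emph{single} Fisher-von Mises profile $M_{\Omega_{\infty}}$. I would identify $\Omega_{\infty}$ as the limit of $\Omega_{\rho_t}$ by first deriving $|\dot J_{\rho_t}|\lesssim\sqrt{D(\rho_t)}$ from the equation via Cauchy-Schwarz, and then noting that once $|J_{\rho_t}|$ is bounded away from~$0$ (which follows from the monotonicity of the energy after phase one) this upgrades to $|\dot\Omega_{\rho_t}|\lesssim\sqrt{D(\rho_t)}$. The exponential decay of $D$ then makes this integrable in time, producing a limit $\Omega_{\infty}$ with $|\Omega_{\rho_t}-\Omega_{\infty}|$ decaying at the same rate. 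Pinsker's inequality converts the entropy bound into $\|\rho_t-M_{\Omega_{\rho_t}}\|_{L^{1}}$ control, and a triangle inequality together with the Lipschitz dependence of $\Omega\mapsto M_{\Omega}$ closes the argument. The hard part, as is already visible in the intricate expression for~$B$, will be the phase-one estimate: without any log-Sobolev control one must rely entirely on Lemma~\ref{lem-DC} to guarantee that the flow actually enters the regime of the localized inequality within the prescribed time.
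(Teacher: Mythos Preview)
Your overall architecture matches the paper's: the entropy identity, the two-phase argument based on Lemma~\ref{lem-DC}, and the passage to a fixed $\Omega_\infty$ via Pinsker and a triangle inequality are all there. But two points need correction.

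First, the trigger for the localized inequality of Section~2 is not that $|J_{\rho}|$ is close to~$1$; Lemma~\ref{lem-LSI} requires $\|\rho M_{\Omega_\rho}^{-1}-1\|_{L^\infty(\bbs^1)}\le\varepsilon_*$. The bridge the paper uses---and which your outline is missing---is the one-dimensional Sobolev-type bound
\[
\|\rho M_{\Omega_\rho}^{-1}-1\|_{L^\infty(\bbs^1)}\le C_M^{-1}e\,\sqrt{D(\rho)},
\]
so that smallness of the dissipation $D$ (found by a pigeonhole argument on $\int_0^{T_*}D=H_0-H_{T_*}\le H_0$) is what places you in the localization region.

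Second, and more seriously, you assert that ``for $t\ge T_0$, the inequality $H\le C_*D$ is now in force'' without justification. Pigeonhole only gives a single time $T_0$ at which $D$ is small; nothing prevents $D$ from growing afterward and pushing $\rho$ out of the $\varepsilon_*$-neighborhood. The paper devotes Section~3.1 to this: once $D(\rho_{T_0})$ is small, the localized LSI gives $H(\rho_{T_0})\le m/2$, whence Csisz\'ar--Kullback--Pinsker yields $|J_{\rho_t}|\ge m/2$ for all $t\ge T_0$, which in turn makes the growth constant in Lemma~\ref{lem-DC} uniformly $2+4/m$. One then iterates: on $[T_0,T_0+L]$ with $L=(2+4/m)^{-1}\log 2$ the dissipation at most doubles, so it stays below the threshold; a fresh pigeonhole on $[T_0+L/2,T_0+L]$ produces a new small-$D$ time, and so on. This bootstrap is the actual ``hard part,'' not the entry time $T_0$ itself.

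Finally, the expression for $B$ in part~1) is not obtained by ``absorbing the ratio $H(\rho_0)/H(\rho_{T_0})$.'' The paper instead manufactures a \emph{time-dependent} log-Sobolev constant $B(t)^{-1}$ valid for every $t>0$: for $t<T_0$ one writes $H(\rho_t)=H(\rho_{T_0})+\int_t^{T_0}D$, bounds $H(\rho_{T_0})\le C_*D(\rho_{T_0})$ by the LSI, and controls both $D(\rho_{T_0})$ and $\int_t^{T_0}D$ by $D(\rho_t)$ via Lemma~\ref{lem-DC} applied forward from $t$. Gr\"onwall with this $B(t)$ then yields the stated rate $B=B(0)$.
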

\begin{remark}
The estimates \eqref{thm-ineq} in Theorem \eqref{thm-converge} represent the exponential convergence of weak solutions towards some steady state $M_{\Omega_{\infty}}$, where it is not clear how to determine the vector $\Omega_{\infty}$ from the initial data $\rho_0$, because the momentum $J_{\rho_t}$ is not conserved in time.
\end{remark}

\begin{remark}
Following the proof in Section 3, the estimates \eqref{thm-ineq} under the above three conditions 1)-3) are straightforward results of the more detailed estimates (see \eqref{main-ineq}):
\begin{align}
\begin{aligned}\label{remark-ineq}
\|\rho_{t}-M_{\Omega_{\infty}}\|_{L^{1}(\mathbb{S}^{1})}
&\le \Big(\int_{\bbs^{1}}\rho_0\log\rho_0\,d\omega +1-\log C_{M}\Big)\\
&\quad\times\bigg(e^{-\int_{0}^{t} B(s)ds}+C\int_{t}^{\infty}e^{-\int_{0}^{r}B(s)ds}dr \bigg), \quad \forall t>0,
\end{aligned}
\end{align}
where $C>0$ is some constant and $B(t)$ is a positive function defined by (with notation $f_+:=\max(f,0)$)
\begin{multline*}
B(t):=\bigg(\frac{|J_{\rho_0}|e^{-T_{0}}}{2|J_{\rho_0}|e^{-T_{0}}+2}\bigg({\rm \exp}\bigg[\bigg(2+\frac{2}{|J_{\rho_{0}}|e^{-T_{0}}}\bigg)T_{0}\bigg]-{\rm \exp}\bigg[\bigg(2+\frac{2}{|J_{\rho_{0}}|e^{-T_{0}}}\bigg)t\bigg]\bigg)_{+}\\
+C_* \exp\bigg[\bigg(2+\frac{2}{|J_{\rho_{0}}|e^{-T_{0}}}\bigg)(T_{0}-t)_{+}\bigg]\bigg)^{-1}.
\end{multline*}
Here, $T_0>0$ is some constant such that
\[
 T_{0}\leq 2H(\rho_0 | M_{\Omega_{\rho_0}})\Big[\min\Big(C_M^2e^{-2}\eps_*^2, \frac{L}{2C_* }C_M^2e^{-2}\eps_*^2, m  C_*^{-1} \Big)\Big]^{-1},
\]
where $C_*, \eps_*$ are the constants appeared in Theorem \ref{thm-converge}, and 
\[
L:=\bigg(2+\frac{4}{m}\bigg)^{-1}\log 2,
\]
where $m$ denotes a strength of momentum of the Fisher-von Mises distribution, i.e.,
\[
m:=\Big|\int_{\bbs^{1}}\omega M_{\Omega_{\rho}} \,d\omega\Big|,
\]
which is a constant (see \cite[Appendix]{F-K-M}).
\end{remark}
\begin{remark}
In the proof of Theorem \ref{thm-converge}, we first obtain the exponential convergence of $H(\rho_{t}|M_{\Omega_{t}})$ as
\beq\label{remark-H}
H(\rho_{t}|M_{\Omega_{\rho_t}})\leq H(\rho_{0}|M_{\Omega_{\rho_{0}}})e^{-\int_{0}^{t} B(s)ds}.
\eeq
However, the Fisher-von Mises distribution $M_{\Omega_{\rho_t}}$ is not constant in time, because of no conservation of momentum $J_{\rho_t}$. In fact, we show that $\Omega_{\rho_t}$ stabilizes to a fixed vector as $t\to \infty$, observing that
$|\frac{d}{dt}J_{\rho_t}|$ vanishes as $t\to \infty$ thanks to the decay estimate \eqref{remark-H}.
\end{remark}
\begin{remark}
Notice that since 
\[
\int_{\bbs^1}\rho_0\log\rho_0\,d\omega -1-\log C_{M} \le H(\rho_{0}\mid M_{\Omega_{\rho_0}})\le \int_{\bbs^1}\rho_0\log\rho_0\,d\omega +1-\log C_{M},
\]
we see that $H(\rho_{0}\mid M_{\Omega_{\rho_0}})<\infty$ if and only if $\int_{\bbs^{1}}\rho_0\log\rho_0 \,d\omega<\infty$. Therefore, the initial condition $\int_{\bbs^{1}}\rho_0\log\rho_0 \,d\omega<\infty$ in \eqref{ini-assume} can be replaced by $H(\rho_{0}\mid M_{\Omega_{\rho_0}})<\infty$.
\end{remark}

In \cite{F-K-M}, the authors have proven that for any dimension $n\ge1$, the model \eqref{main} on $n$-dimensional sphere $\bbs^n$ has a  weak solution $\rho \in L_{loc}^{2}([0,\infty),W^{1,1}(\bbs^{n}))$ that satisfies 
\beq\label{J-positive}
|J_{\rho_t}|\geq |J_{\rho_{0}}| e^{-nt}, \quad \forall t>0.
\eeq
Additionally, we showed short-time stability in 2-Wasserstein distance $W_2$ as follows: for any probability measure $\rho_{0}$ and $\bar{\rho}_{0}$ satisfying \eqref{ini-assume}
  and
  \[
   W_{2}(\rho_{0},\bar\rho_{0})\leq \frac{|J_{\rho_{0}}|}{16},
  \]
 there exists $\delta>0$ such that any two solutions $\rho_t$ and $\bar\rho_t$ of \eqref{main} starting from $\rho_{0}$ and $\bar{\rho}_{0}$ satisfies
  \begin{equation}\label{stab}
  W_{2}(\rho_{t},\bar{\rho}_{t}) \leq \exp{\bigg[\Big(2-n+\frac{2}{|J_{\rho_{0}}|}\Big)t\bigg]}W_{2}(\rho_{0},\bar{\rho}_{0}),\quad \forall t<\delta,
  \end{equation}
where the short time $\delta$ is explicitly found as 
\beq\label{w-delta}
\delta=\frac{|J_{\rho_{0}}|^{4}}{2^{8}\mbox{max}\{H(\rho_{0}|M_{\Omega_{\rho_{0}}}),H(\bar\rho_{0}|M_{\Omega_{\bar\rho_{0}}})\}}.
\eeq
In fact, for this stability estimate, the authors have not used the logarithmic Sobolev inequality based on the criterion of Bakry and Emery. Therefore, the above stability still holds when $n=1$, whose case will be added in \cite{F-K-M}. On the other hand, the uniqueness of the weak solutions has been proven in the case of $n\ge 2$, by using the stability estimate \eqref{stab} up to $\delta$ in \eqref{w-delta} together with the exponential convergence towards Fisher-von Mises distribution. 

Therefore, once we get the exponential convergence \eqref{main-ineq} in Theorem \ref{thm-converge}, the uniqueness of the weak solution to \eqref{main} holds true by the same argument in \cite{F-K-M} as mentioned above. Recently, we recognized that we can prove the uniqueness, based on \eqref{stab} and the energy method with the parabolic regularity without using the large-time behavior. We give its proof in Appendix B as an another proof of the uniqueness.  \\

The paper is organized as follows. In the next section, we present a localized version for logarithmic Sobolev inequality, which is crucially used in the proof of Theorem \ref{thm-converge} in Section 3.

\section{Logarithmic Sobolev inequality}
\setcounter{equation}{0}
In this section, we present a simple condition for
the logarithmic Sobolev inequality on $\mathbb{S}^{1}$ equipped with the ambient metric. 
Such condition is one of the crucial ingredients that we use to show the exponential decay \eqref{main-ineq}. 
We first fix a 
reference probability measure $e^{-\Psi(\omega)} d\omega$ such that 
$\Psi\in C^1(\mathbb{S}^{1})$. Then, for a given probability 
measure $\rho \, d\omega$, we define its relative entropy with respect to $e^{-\Psi(\omega)} d\omega$ by
\begin{align*}
\begin{aligned} 
H(\rho | e^{-\Psi} )=\int_{\mathbb{S}^{1}}\log \Big(\frac{\rho}{e^{-\Psi}}\Big) \rho \hspace{1mm} d\omega,
\end{aligned}
\end{align*}
and the relative Fisher information by 
\[
I(\rho | e^{-\Psi})=\int_{\mathbb{S}^{1}}\Big|\nabla \log \frac{\rho}{e^{-\Psi}} \Big|^2 \rho \, d\omega,
\]
where $\nabla$ denotes the gradient on $\mathbb{S}^{1}$.\\
We recall the celebrated criterion of Bakry and Emery \cite{B-E} for a logarithmic Sobolev inequality as follows: If there exists a constant $\alpha>0$ such that
\beq\label{BE-ass}
D^2 \Psi + \mbox{Ricci curvature tensor on $\mathbb{S}^{1}$} \ge \alpha I_n, 
\eeq
then the probability measure $e^{-\Psi} d\omega$ satisfies a logarithmic Sobolev inequality with $\alpha$, i.e., for all probability measure $\rho \hspace{1mm} d\omega$ we have
\[
H(\rho | e^{-\Psi} )\le \frac{1}{2\alpha} I(\rho | e^{-\Psi}).
\]
In \cite{F-K-M}, we proved the exponential convergence estimate like \eqref{main-ineq} with explicit decay rate, using the above criterion \eqref{BE-ass} together with the facts that the Ricci curvature tensor of $\bbs^{n}$ is $(n-1)I_{n}$, and the logarithmic Sobolev inequality is stable under bounded perturbations (see \cite{H-S, Otto-Villani}). 
However, if we consider the 1-dimensional sphere $\bbs^{1}$,
its Ricci curvature vanishes. Therefore, the condition \eqref{BE-ass} is not satisfied anymore when $\Psi$ takes the form $\Psi(\omega)=-\omega\cdot\Omega,$ as  a Fisher-von Mises distribution. 
We will overcome this difficulty by imposing a smallness condition of the $L^{\infty}$
distance between $\rho$ and $e^{-\Psi}$ in the following Lemma.
\begin{lemma}\label{lem-LSI}
Let $\rho$ and $e^{-\Psi}$ be probability measures on  $\mathbb{S}^{1}$ such that $\rho, \Psi\in C^1(\mathbb{S}^{1})$, and $|\Psi| \le\lambda$, for some constant $\lambda$. For any fixed positive constant $\eps_*\le\frac{1}{10}$, if
\[
\|\rho e^{\Psi}-1\|_{L^{\infty}(\mathbb{S}^{1})}\le\eps_*,
\]
then,
\beq\label{LSI-1}
H(\rho | e^{-\Psi} )\le \frac{2\pi^{2}e^{2\lambda}(1+\frac{1}{15}\varepsilon_{*})}{1-\frac{7}{6}\varepsilon_{*}} I(\rho | e^{-\Psi}).
\eeq
\end{lemma}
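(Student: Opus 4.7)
The plan is to reduce the logarithmic Sobolev inequality to a weighted Poincar\'e-type inequality on $\bbs^1$, using the $L^\infty$-closeness of $\rho$ to $e^{-\Psi}$. The Bakry--Emery strategy is unavailable here because $\bbs^1$ has zero Ricci curvature, so some perturbative input beyond curvature is essential.

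Setting $h := \rho e^{\Psi}$, the hypothesis reads $1-\varepsilon_* \le h \le 1+\varepsilon_*$, while $\int \rho\, d\omega = 1 = \int e^{-\Psi}\,d\omega$ forces $h-1$ to have mean zero against the probability measure $e^{-\Psi}\,d\omega$. In these variables, $H(\rho | e^{-\Psi}) = \int_{\bbs^1} h\log h\, e^{-\Psi}\,d\omega$ and $I(\rho | e^{-\Psi}) = \int_{\bbs^1} |\nabla h|^2 h^{-1} e^{-\Psi}\,d\omega$. Two elementary pointwise estimates reduce the problem to a Poincar\'e inequality: a quantitative Taylor bound of the form $h\log h \le (h-1) + \frac{1}{2}(1+\alpha(\varepsilon_*))(h-1)^2$ (valid since $h\in [1-\varepsilon_*,1+\varepsilon_*]$, with $\alpha(\varepsilon_*)\to 0$ as $\varepsilon_*\to 0$) makes the linear term integrate to zero and gives $H(\rho | e^{-\Psi}) \le \frac{1+\alpha(\varepsilon_*)}{2}\int(h-1)^2 e^{-\Psi}\,d\omega$, while $1/h \ge 1/(1+\varepsilon_*)$ yields $\int |\nabla h|^2 e^{-\Psi}\,d\omega \le (1+\varepsilon_*) I(\rho | e^{-\Psi})$.

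The main analytic step is then the weighted Poincar\'e inequality
\[
\int_{\bbs^1}(h-1)^2 e^{-\Psi}\,d\omega \;\le\; 2\pi^2 e^{2\lambda}\int_{\bbs^1}|\nabla h|^2 e^{-\Psi}\,d\omega.
\]
Because $h-1$ has mean zero against a strictly positive measure, by continuity there is $\omega_0\in\bbs^1$ with $h(\omega_0)=1$. For any $\omega\in\bbs^1$, write $h(\omega)-1=\int_{\omega_0}^{\omega}\partial_\theta h\,d\theta$ along the shorter arc (length at most $\pi$); Cauchy--Schwarz then yields $\|h-1\|_{L^\infty(\bbs^1)}^2 \le \pi\int_{\bbs^1}|\nabla h|^2\,d\omega$. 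Combining this with the trivial $\int(h-1)^2\,d\omega \le 2\pi\|h-1\|_{L^\infty(\bbs^1)}^2$ and the two-sided bounds $e^{-\lambda}\le e^{-\Psi}\le e^\lambda$ (used once to switch $d\omega$ to $e^{-\Psi}d\omega$ and once the other way) supplies the factor $2\pi^2 e^{2\lambda}$.

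Chaining the three pieces produces the claimed bound $H(\rho | e^{-\Psi}) \le C\, I(\rho | e^{-\Psi})$ with $C$ of the shape $2\pi^2 e^{2\lambda}$ modulated by factors $1+O(\varepsilon_*)$ in both numerator and denominator. The overall strategy---Taylor plus weighted Poincar\'e, with the $L^\infty$ smallness plugged in at each step---is clean and direct; the principal obstacle is the careful book-keeping required to land on the specific numerical constants $1/15$ and $7/6$ appearing in \eqref{LSI-1}. This calls for a quantitative Taylor remainder for $x\log x$---most naturally via the integral representation $h\log h - (h-1) = \int_0^1 \frac{(1-t)(h-1)^2}{1+t(h-1)}\,dt$, bounded using $1+t(h-1)\ge 1-t\varepsilon_*$---together with a matching lower bound for $1/h$ obtained by the same integral technique.
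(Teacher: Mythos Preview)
Your approach is correct and follows the same essential strategy as the paper: linearize both $H$ and $I$ about the reference measure in the variable $f=h-1=\rho e^{\Psi}-1$, then connect the resulting quadratic forms via a weighted Poincar\'e inequality on $\bbs^1$ (using that $f$ has a zero since $\int f e^{-\Psi}d\omega=0$). The paper organizes the Taylor step differently: it introduces the interpolating family $\gamma(s)=(1+sf)e^{-\Psi}$ and applies Taylor's theorem with third-order Lagrange remainders to both $s\mapsto H(\gamma(s)\mid e^{-\Psi})$ and $s\mapsto I(\gamma(s)\mid e^{-\Psi})$; for the Fisher information this requires a fairly lengthy computation of $\frac{d^3}{ds^3}I(\gamma(s)\mid e^{-\Psi})$, whose remainder is then bounded by $42\varepsilon_*\int|\nabla f|^2 e^{-\Psi}d\omega$. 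Your direct pointwise bounds---the integral remainder for $h\log h$ on the entropy side, and simply $1/h\ge 1/(1+\varepsilon_*)$ on the Fisher side---bypass that computation entirely and are more elementary. In the Poincar\'e step you also use the shorter arc (length $\le\pi$) rather than the full circle, which yields the constant $2\pi^2$ instead of the paper's $4\pi^2$; combined with the factor $\tfrac12$ from the Taylor expansion, your chain actually gives a leading constant $\pi^2 e^{2\lambda}$, slightly better than the stated $2\pi^2 e^{2\lambda}$, so the lemma as written follows a fortiori.
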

\begin{proof}
We set $f:=\rho e^{\Psi}-1$, $\gamma (s) := (1+sf)e^{-\Psi}$ for $s\in[0,1]$.  Then we define 
\[
H(\gamma(s) | e^{-\Psi} )=\int_{\mathbb{S}^{1}} (1+sf)e^{-\Psi}\log (1+sf) d\omega,
\]
and 
\[
I(\gamma(s) | e^{-\Psi} )=\int_{\mathbb{S}^{1}} (1+sf)e^{-\Psi}|\nabla \log (1+sf)|^2 d\omega.
\]
for every $s$  in$[0,1].$ \\
We will use the Taylor theorem and the Poincar\'e inequality to show \eqref{LSI-1}. First of all, we see that
\[
H(\gamma(0) | e^{-\Psi} )=0=I(\gamma(0) | e^{-\Psi}).
\]
A straightforward computation yields
\begin{align*}
\begin{aligned}
&\frac{d}{ds}H(\gamma(s) | e^{-\Psi} ) = \int_{\mathbb{S}^{1}} (1+\log (1+sf)) f e^{-\Psi} d\omega, \\
&\frac{d^2}{ds^2}H(\gamma(s) | e^{-\Psi} ) = \int_{\mathbb{S}^{1}} \frac{f^2 }{1+sf}e^{-\Psi} d\omega, \\
&\frac{d^3}{ds^3}H(\gamma(s) | e^{-\Psi} ) = -\int_{\mathbb{S}^{1}} \frac{f^3 }{(1+sf)^2}e^{-\Psi} d\omega.
\end{aligned}
\end{align*}
Setting $g(s)=1+sf$, by direct computation, we obtain
\begin{align*}
\begin{aligned}
&\frac{d}{ds}I(\gamma(s) | e^{-\Psi} ) = \int_{\mathbb{S}^{1}} \nabla \log (g(s)) \Big[2\nabla\Big(\frac{g'(s)}{g(s)}\Big)g(s)+\nabla\log(g(s)) g'(s)\Big] e^{-\Psi} d\omega,\\
&\frac{d^2}{ds^2}I(\gamma(s) | e^{-\Psi} ) = \int_{\mathbb{S}^{1}} \nabla \Big(\frac{g'(s)}{g(s)}\Big)  \Big[2\nabla\Big(\frac{g'(s)}{g(s)}\Big)g(s)+\nabla\log(g(s))g'(s)\Big] e^{-\Psi} d\omega\\
&\qquad +  \int_{\mathbb{S}^{1}} \nabla \log (g(s)) \Big[-2\nabla\Big(\frac{|g'(s)|^2}{g(s)^2}\Big)g(s)+3\nabla\Big(\frac{g'(s)}{g(s)}\Big)g'(s) \Big] e^{-\Psi} d\omega,\\
&\frac{d^3}{ds^3}I(\gamma(s) | e^{-\Psi} ) = -\int_{\mathbb{S}^{1}} \nabla\Big(\frac{|g'(s)|^2}{g(s)^2}\Big) \Big[2\nabla\Big(\frac{g'(s)}{g(s)}\Big)g(s)+\nabla\log(g(s))g'(s)\Big] e^{-\Psi} d\omega\\
&\qquad +  2\int_{\mathbb{S}^{1}} \nabla \Big(\frac{g'(s)}{g(s)}\Big)  \Big[-2\nabla\Big(\frac{|g'(s)|^2}{g(s)^2}\Big)g(s)+3\nabla\Big(\frac{g'(s)}{g(s)}\Big)g'(s) \Big] e^{-\Psi} d\omega\\
&\qquad +  \int_{\mathbb{S}^{1}} \nabla\log (g(s)) \Big[4\nabla\Big(\frac{|g'(s)|^3}{g(s)^3}\Big)g(s)-5\nabla\Big(\frac{|g'(s)|^2}{g(s)^2}\Big)g'(s) \Big] e^{-\Psi} d\omega,
\end{aligned}
\end{align*}
where we have the fact that used $g''(s)=0$.\\
Since
\begin{align*}
\begin{aligned}
&\frac{d}{ds}\Big|_{s=0}H(\gamma(s) | e^{-\Psi} ) = \int_{\mathbb{S}^{1}}  f e^{-\Psi} d\omega = \int_{\mathbb{S}^{1}} (\rho - e^{-\Psi}) d\omega=0,\\
&\frac{d}{ds}\Big|_{s=0} I(\gamma(s) | e^{-\Psi} ) = 0,
\end{aligned}
\end{align*}
it follows from Taylor theorem that there exists $s_1, s_2\in (0,1)$ such that
\begin{align}
\begin{aligned}\label{taylor}
&H(\rho | e^{-\Psi} ) =H(\gamma(1) | e^{-\Psi} ) = \frac{1}{2}\int_{\mathbb{S}^{1}}  f^2 e^{-\Psi} d\omega +\frac{1}{6}\frac{d^3}{ds^3}\Big|_{s=s_1}H(\gamma(s) | e^{-\Psi} ),\\
&I(\rho | e^{-\Psi} ) =I(\gamma(1) | e^{-\Psi} ) = \int_{\mathbb{S}^{1}}  |\nabla f|^2 e^{-\Psi} d\omega +\frac{1}{6}\frac{d^3}{ds^3}\Big|_{s=s_2}I(\gamma(s) | e^{-\Psi} ).
\end{aligned}
\end{align}
Since $\int_{\bbs^1}  f e^{-\Psi} d\omega =0$ and $e^{-\Psi}>0$, there exists $\omega_0\in\bbs^1$ such that $f(\omega_0)=0$. Then, using $|\Psi| \le\lambda$, we have a Poincar\'e inequality:
\begin{align}\label{poincare}
\begin{aligned}
\int_{\mathbb{S}^{1}}  f^2 e^{-\Psi} d\omega  &\le e^{\lambda}\int_{\mathbb{S}^{1}}  f^2 d\omega = e^{\lambda}\int_{\mathbb{S}^{1}} (f(\omega)-f(\omega_0))^2 d\omega = e^{\lambda}\int_{\mathbb{S}^{1}} \Big(\int_{\omega_0}^{\omega}\nabla f \Big)^2 d\omega \\
&\le 4e^{\lambda}\pi^2\int_{\mathbb{S}^{1}} |\nabla f|^2 d\omega \le 4e^{2\lambda}\pi^2\int_{\mathbb{S}^{1}} |\nabla f|^2 e^{-\Psi} d\omega.
\end{aligned}
\end{align}
A straightforward computation with $\|f\|_{L^{\infty}(\bbs^1)}\le\eps_*\le \frac{1}{10}$ gives the estimates for the third order terms in \eqref{taylor} as follows:
\[
 \Big|\frac{d^3}{ds^3}\Big|_{s=s_1}H(\gamma(s) | e^{-\Psi})\Big| \le \frac{1}{5}\eps_* \int_{\mathbb{S}^{1}}  f^2 e^{-\Psi} d\omega
\]
and
\[
 \Big|\frac{d^3}{ds^3}\Big|_{s=s_2}I(\gamma(s) | e^{-\Psi}) \Big| \le  42\eps_* \int_{\mathbb{S}^{1}}  |\nabla f|^2 e^{-\Psi} d\omega, 
\]
Therefore, using \eqref{taylor}, \eqref{poincare}, and the above estimates we get
\begin{multline*}
H(\rho|e^{-\Psi})
\leq\frac{1}{2}\Big(1+\frac{1}{15}\eps_{*}\Big)\int_{\mathbb{S}^{1}}f^{2}e^{-\Psi}d\omega\\ 
\le2\pi^{2}e^{2\lambda}\Big(1+\frac{1}{15}\eps_{*}\Big)\int_{\mathbb{S}^{1}}|\nabla f|^{2}e^{-\Psi}d\omega\le\frac{2\pi^{2}e^{2\lambda}\Big(1+\frac{1}{15}\eps_{*}\Big)}{1-\frac{7}{6}\eps_{*}}I(\rho|e^{-\Psi}).
\end{multline*}
\end{proof}

\begin{remark}
 In the proof of Lemma \ref{lem-LSI}, we see that the Poincar\'e 
inequality implies the logarithmic Sobolev inequality \eqref{LSI-1},
provided $L^{\infty}$ distance between $\rho$ and $e^{-\Psi}$ is
suitably small on $\mathbb{S}^{1}$. In general, we refer to \cite{Rothaus} (see also \cite{Otto-Villani}) on the implication of the Poincar\'e inequality as a linearization of the logarithmic Sobolev inequality, on a compact Riemannian manifold.
\end{remark}

\section{Proof of Theorem \ref{thm-converge}}
\setcounter{equation}{0}
We begin by observing that a straightforward computation with \eqref{main-0} implies 
\begin{align}
\begin{aligned}\label{ineq-1}
\frac{d}{dt}H(\rho_{t} 
| M_{\Omega_{\rho_{t}}}) = 
- I(\rho_{t} | M_{\Omega_{\rho_{t}}}).
\end{aligned}
\end{align}
We first show that $H(\rho | M_{\Omega_{\rho}})$ exponentially decays after a suitably large time using \eqref{ineq-1} and Lemma \ref{lem-LSI}. Notice that the weak solution $\rho$ to \eqref{main} becomes smooth instantaneously thanks to the parabolic regularization, therefore we can consider smoothness of weak solutions $\rho$ to \eqref{main} for the decay estimates. Moreover, since $\rho_0>0$ by the assumption, we see that the weak solution $\rho$ of the parabolic equation \eqref{main} is positive for all time, i.e., $\rho_t>0$ for all $t>0$.  \\
Since $\bbs^1$ is one-dimensional manifold, we easily see that $\|\rho M_{\Omega_{\rho}}^{-1} -1\|_{L^{\infty}(\bbs^1)}$ is controlled by the dissipation $I(\rho | M_{\Omega_{\rho}})$ as follows:
\beq\label{Sobo}
\|\rho M_{\Omega_{\rho}}^{-1} -1\|_{L^{\infty}(\bbs^1)}\le C_M^{-1}e\sqrt{I(\rho | M_{\Omega_{\rho}})}.
\eeq  
Indeed, since $\int_{\bbs^1}\rho d\omega=\int_{\bbs^1}M_{\Omega_{\rho}} d\omega=1$, there exists $\omega_0\in\bbs^1$ such that $\rho(\omega_0)=M_{\Omega_{\rho}}(\omega_0)$, which together with $C_M e^{-1}\le  M_{\Omega_{\rho}} \le C_M e$ imply that
\begin{align*}
\begin{aligned}
\rho M_{\Omega_{\rho}}^{-1}(\omega) -1 &=\rho M_{\Omega_{\rho}}^{-1}(\omega) -\rho M_{\Omega_{\rho}}^{-1}(\omega_0) =\int_{\omega_0}^{\omega} \nabla (\rho M_{\Omega_{\rho}}^{-1})\hspace{1mm}d\omega \\
&\le \sqrt{ C_M^{-1}e}\Big(\int_{\bbs^1} \frac{|\nabla (\rho M_{\Omega_{\rho}}^{-1})|^2}{\rho M_{\Omega_{\rho}}^{-1}} d\omega\Big)^{1/2} 
\le  C_M^{-1}e\Big(\int_{\bbs^1} \Big|\frac{\nabla (\rho M_{\Omega_{\rho}}^{-1})}{\rho M_{\Omega_{\rho}}^{-1}}\Big|^2 \rho\hspace{1mm}d\omega\Big)^{1/2}\\
&= C_M^{-1}e\sqrt{I(\rho | M_{\Omega_{\rho}})}.
\end{aligned}
\end{align*}
We will show that for $\eps_*$ appeared in Lemma \ref{lem-LSI}, there exists $T_0>0$ such that
\beq\label{diss-small}
I(\rho_t | M_{\Omega_{\rho_t}}) \le C_M^2 e^{-2} \eps_*^2,\quad  \forall t\ge T_0,
\eeq
therefore we use Lemma \ref{lem-LSI} together with \eqref{ineq-1} to get the exponential decay of $H(\rho | M_{\Omega_{\rho}})$.\\

Before showing \eqref{diss-small}, we first prove the following lemma on growth estimates of the dissipation $I(\rho | M_{\Omega_{\rho}})$.\\

\begin{lemma}\label{lem-DC}
Let $\rho_{0}\in\mathcal{P}(\bbs^{1})$ be an initial probability measure satisfying \eqref{ini-assume}. Then, the solution $\rho$ of \eqref{main} starting from $\rho_0$ satisfies that for any $t\ge s> 0$,
\beq\label{d-ineq}
I(\rho_t | M_{\Omega_{\rho_t}})\leq I(\rho_s | M_{\Omega_{\rho_s}}) e^{C(t,s)(t-s)},
\eeq
where
\[
C(t,s)=2+\frac{2}{\min_{r\in[s,t]}|J_{\rho_{r}}|}.
\]
\end{lemma}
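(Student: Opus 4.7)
The plan is to derive the pointwise differential inequality
\[
\frac{d}{dt} I(\rho_{t}\mid M_{\Omega_{\rho_{t}}})\le \Big(2+\frac{2}{|J_{\rho_{t}}|}\Big) I(\rho_{t}\mid M_{\Omega_{\rho_{t}}}),
\]
from which Gr\"onwall on $[s,t]$ delivers \eqref{d-ineq}, since the integrated coefficient is bounded by $C(t,s)(t-s)$. Parabolic regularisation and strict positivity $\rho_{t}>0$ (both noted in the preamble) justify the pointwise calculus. Setting $u_{t}:=\log(\rho_{t}/M_{\Omega_{\rho_{t}}})$, the equation \eqref{main-0} reads $\partial_{t}\rho=\nabla_{\omega}\cdot(\rho\nabla_{\omega} u)$, and $I(\rho_{t}\mid M_{\Omega_{\rho_{t}}})=\int_{\bbs^{1}}|\nabla_{\omega} u|^{2}\rho\,d\omega$.

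Differentiating, using $\partial_{t} u=\nabla_{\omega}\cdot(\rho\nabla_{\omega} u)/\rho-\omega\cdot\partial_{t}\Omega_{\rho_{t}}$, and integrating by parts twice produces
\[
\frac{d}{dt}I=-2\!\int_{\bbs^{1}}\!\frac{\bigl(\nabla_{\omega}\!\cdot\!(\rho\nabla_{\omega} u)\bigr)^{2}}{\rho}d\omega-2\!\int_{\bbs^{1}}\!(\nabla_{\omega}^{2}u)(\nabla_{\omega} u,\nabla_{\omega} u)\rho\,d\omega-2\!\int_{\bbs^{1}}\!\bbp_{\omega^{\perp}}(\partial_{t}\Omega_{\rho_{t}})\!\cdot\!\nabla_{\omega}u\,\rho\,d\omega.
\]
The first two summands are precisely those produced by a Bakry--Emery $\Gamma_{2}$ calculation for the frozen reference measure $M_{\Omega_{\rho_{t}}}$; Bochner's identity rewrites them as
\[
-2\int_{\bbs^{1}}\bigl[|\nabla_{\omega}^{2}u|^{2}+\mathrm{Ric}(\nabla_{\omega} u,\nabla_{\omega} u)+(\omega\cdot\Omega_{\rho_{t}})|\nabla_{\omega} u|^{2}\bigr]\rho\,d\omega,
\]
the last summand arising because the intrinsic Hessian of $\omega\mapsto\omega\cdot\Omega$ on $\bbs^{1}$ equals $-(\omega\cdot\Omega)g$. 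Since Ricci vanishes on $\bbs^{1}$ and $|\omega\cdot\Omega_{\rho_{t}}|\le 1$, discarding the favourable $|\nabla_{\omega}^{2}u|^{2}$ piece leaves an upper bound of $2I$.

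For the correction term, differentiating $\Omega_{\rho}=J_{\rho}/|J_{\rho}|$ gives $\partial_{t}\Omega_{\rho}=\bbp_{\Omega_{\rho}^{\perp}}(\partial_{t}J_{\rho})/|J_{\rho}|$ and hence $|\partial_{t}\Omega_{\rho}|\le|\partial_{t}J_{\rho}|/|J_{\rho}|$. Writing $\partial_{t}J_{\rho}=\int_{\bbs^{1}}\omega\,\nabla_{\omega}\cdot(\rho\nabla_{\omega} u)\,d\omega$, one integration by parts (using $|\nabla_{\omega}\omega|\le 1$) followed by Cauchy--Schwarz with $\int\rho\,d\omega=1$ yields $|\partial_{t}J_{\rho}|\le\sqrt{I}$. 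Consequently the correction term is bounded in absolute value by $2|\partial_{t}\Omega_{\rho_{t}}|\sqrt{I}\le 2I/|J_{\rho_{t}}|$, and combining with the previous step produces the announced differential inequality. Gr\"onwall on $[s,t]$ then concludes.

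The principal technical step is the Bochner identification of the two ``diffusive'' integrals with the $\Gamma_{2}$ form, which realises concretely the Wasserstein-Hessian viewpoint sketched in the introduction. Once this is done, the low dimension of $\bbs^{1}$ costs us only the missing positive Ricci contribution---irrelevant for the upper bound at hand---while the factor $1/|J_{\rho_{t}}|$ in $C(t,s)$ is forced by differentiating the unit vector $\Omega_{\rho}=J_{\rho}/|J_{\rho}|$, exactly as expected from the non-conservation of $J_{\rho}$.
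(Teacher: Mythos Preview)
Your argument is correct and follows the same strategy as the paper: differentiate the Fisher information, recognise the ``frozen--reference'' part as a Bakry--Emery $\Gamma_2$ expression handled by Bochner, isolate the correction coming from $\partial_t\Omega_{\rho_t}$, and then apply Gr\"onwall. The paper carries this out by a longer bare-hands computation---splitting $I=I_1+I_2$, differentiating each piece, and reassembling into $J_1+J_2+J_3$---to obtain the exact identity \eqref{claim-0} before bounding, whereas you invoke the $\Gamma_2$ formalism directly and bound the correction term via $|\partial_t\Omega_{\rho}|\le|\partial_t J_{\rho}|/|J_{\rho}|\le\sqrt{I}/|J_{\rho}|$ and Cauchy--Schwarz; both routes yield the same contribution $2I/|J_{\rho_t}|$, so the difference is purely organisational.
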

\begin{proof}
We claim that
\begin{align}
\begin{aligned}\label{claim-0}
\frac{d}{dt}I(\rho_t | M_{\Omega_{\rho_t}})&=-2\int_{\bbs^1} \mbox{tr}\Big(\Big[D^{2}\log\frac{\rho_{t}}{e^{\omega\cdot\Omega_{\rho_{t}}}}\Big]^{T}D^{2}\log\frac{\rho_{t}}{e^{\omega\cdot\Omega_{\rho_{t}}}}\Big)\rho_{t} \hspace{1mm}d\omega\\
&\quad+2\int_{\bbs^1}\nabla\log\frac{\rho_{t}}{e^{\omega\cdot\Omega_{\rho_{t}}}}D^{2}(\omega\cdot\Omega_{\rho_{t}})\nabla\log\frac{\rho_{t}}{e^{\omega\cdot\Omega_{\rho_{t}}}}\rho_{t} \hspace{1mm}d\omega\\
&\quad+\frac{2}{|J_{\rho_{t}}|}\bigg(\bigg|\int_{\bbs^1}\nabla\log\frac{\rho_{t}}{e^{\omega\cdot\Omega_{\rho_{t}}}}\rho_{t} d\omega\bigg|^{2}-\bigg|\int_{\bbs^1}\Omega_{\rho_{t}}\cdot\nabla\log\frac{\rho_{t}}{e^{\omega\cdot\Omega_{\rho_{t}}}}\rho_{t} d\omega\bigg|^{2}\bigg).
\end{aligned} 
\end{align}
Once we show the above equality, since the first term in r.h.s. of \eqref{claim-0} is non-positive and the Hessian of the map $\omega \mapsto \omega\cdot\Omega_{\rho_t}$ has norm bounded by $1$, we have
\begin{align*}
\begin{aligned}
\frac{d}{dt}I(\rho_t | M_{\Omega_{\rho_t}})&\le \Big(2+\frac{2}{|J_{\rho_{t}}|}\Big)\int_{\bbs^1}\Big|\nabla\log\frac{\rho_{t}}{e^{\omega\cdot\Omega_{\rho_t}}}\Big|^{2}\rho_{t} \hspace{1mm}d\omega\\
&= \Big(2+\frac{2}{|J_{\rho_{t}}|}\Big)I(\rho_t | M_{\Omega_{\rho_t}}),
\end{aligned} 
\end{align*}
which provides the desired inequality \eqref{d-ineq}. Therefore, it remains to prove the claim \eqref{claim-0}.\\
First of all, we separate $I(\rho_t | M_{\Omega_{\rho_t}})$ into two parts as
\begin{align*}
\begin{aligned}
I(\rho_t | M_{\Omega_{\rho_t}})&=\int_{\bbs^1}\nabla\log\frac{\rho_{t}}{e^{\omega\cdot\Omega_{\rho_{t}}}}\cdot\nabla\log\rho_{t}\rho_{t} \hspace{1mm}d\omega
-\int_{\bbs^1}\nabla\log\frac{\rho_{t}}{e^{\omega\cdot\Omega_{\rho_{t}}}}\cdot\nabla(\omega\cdot
\Omega_{\rho_{t}})\rho_{t} \hspace{1mm}d\omega\\ 
&=: I_{1}+I_{2},
\end{aligned} 
\end{align*}
Using the Eq. \eqref{main-0} and integration by parts, we compute $\frac{d}{dt} I_{1}$ as follows:
\begin{align*}
\begin{aligned}
\frac{d}{dt} I_{1} &=\int_{\bbs^1}\nabla\partial_{t}\log\frac{\rho_{t}}{e^{\omega\cdot\Omega_{\rho_{t}}}}\cdot\nabla\rho_{t} \hspace{1mm}d\omega
 +\int_{\bbs^1}\nabla\log\frac{\rho_{t}}{e^{\omega\cdot\Omega_{\rho_{t}}}}\cdot\nabla\partial_{t}\rho_{t} \hspace{1mm}d\omega\\ 
 &=-\int_{\bbs^1}\partial_{t}\log\frac{\rho_{t}}{e^{\omega\cdot\Omega_{\rho_{t}}}}\Delta\rho_{t} \hspace{1mm}d\omega
 -\int_{\bbs^1} \Delta\log\frac{\rho_{t}}{e^{\omega\cdot\Omega_{\rho_{t}}}}\nabla\cdot\Big(\nabla\log\frac{\rho_{t}}{e^{\omega\cdot\Omega_{\rho_{t}}}} \rho_{t}\Big) \hspace{1mm}d\omega\\ 
 &=-\int_{\bbs^1}\frac{e^{\omega\cdot\Omega_{\rho_{t}}}}{\rho_{t}}\partial_{t}\frac{\rho_{t}}{e^{\omega\cdot\Omega_{\rho_{t}}}}\Delta\rho_{t} \hspace{1mm}d\omega
 +\int_{\bbs^1}\nabla\Delta\log\frac{\rho_{t}}{e^{\omega\cdot\Omega_{\rho_{t}}}}\cdot\nabla\log\frac{\rho_{t}}{e^{\omega\cdot\Omega_{\rho_{t}}}} \rho_{t} \hspace{1mm}d\omega\\ 
 &=-\int_{\bbs^1}\frac{\partial_{t}\rho_{t}}{\rho_{t}} \Delta\rho_{t}\hspace{1mm}d\omega+\int\partial_{t}(\omega\cdot\Omega_{\rho_{t}})\Delta\rho_{t}\hspace{1mm}d\omega +\int_{\bbs^1}\nabla\Delta\log\frac{\rho_{t}}{e^{\omega\cdot\Omega_{\rho_{t}}}}\cdot\nabla\log\frac{\rho_{t}}{e^{\omega\cdot\Omega_{\rho_{t}}}} \rho_{t} \hspace{1mm}d\omega.
\end{aligned} 
\end{align*}
Similarly, compute $\frac{d}{dt} I_{2}$ as follow:
\begin{align*}
\begin{aligned}
\frac{d}{dt} I_{2} &=-\int_{\bbs^1}\nabla\partial_{t}\log\frac{\rho_{t}}{e^{\omega\cdot\Omega_{\rho_{t}}}}\cdot\nabla(\omega\cdot
\Omega_{\rho_{t}})\rho_{t} \hspace{1mm}d\omega
-\int_{\bbs^1}\nabla\log\frac{\rho_{t}}{e^{\omega\cdot\Omega_{\rho_{t}}}}\cdot\nabla\partial_{t}(\omega\cdot
\Omega_{\rho_{t}})\rho_{t} \hspace{1mm}d\omega\\
&\quad-\int_{\bbs^1}\nabla\log\frac{\rho_{t}}{e^{\omega\cdot\Omega_{\rho_{t}}}}\cdot\nabla(\omega\cdot
\Omega_{\rho_{t}})\partial_{t}\rho_{t} \hspace{1mm}d\omega\\ 
&=:I_{21}+I_{22}+I_{23},
\end{aligned}
\end{align*}
where the three terms are computed as
\begin{align*}
\begin{aligned}
I_{21}&=\int_{\bbs^1}\partial_{t}\log\frac{\rho_{t}}{e^{\omega\cdot\Omega_{\rho_{t}}}}\nabla\cdot\Big(\nabla(\omega\cdot
\Omega_{\rho_{t}})\rho_{t}\Big) \hspace{1mm}d\omega\\
&=\int_{\bbs^1} \Big(\frac{\partial_{t}\rho_{t}}{\rho_{t}}-\partial_{t}(\omega\cdot\Omega_{\rho_{t}})\Big)\Big(\Delta(\omega\cdot
\Omega_{\rho_{t}})\rho_{t}+\nabla(\omega\cdot
\Omega_{\rho_{t}})\cdot\nabla\rho_{t}\Big) \hspace{1mm}d\omega,\\
I_{22}&=-\int_{\bbs^1}\Big(\nabla\rho_{t}  -\nabla(\omega\cdot\Omega_{\rho_{t}}) \rho_{t}\Big)\cdot\nabla\partial_{t}(\omega\cdot
\Omega_{\rho_{t}}) \hspace{1mm}d\omega\\
&=\int_{\bbs^1}\Big(\Delta\rho_{t}-\Delta(\omega\cdot\Omega_{\rho_{t}})\rho_{t} -\nabla(\omega\cdot\Omega_{\rho_{t}})\cdot\nabla\rho_{t}\Big)\partial_{t}(\omega\cdot\Omega_{\rho_{t}}) \hspace{1mm}d\omega,\\
I_{23}&=-\int_{\bbs^1}\nabla\log\frac{\rho_{t}}{e^{\omega\cdot\Omega_{\rho_{t}}}}\cdot\nabla(\omega\cdot
\Omega_{\rho_{t}})\nabla\cdot\Big(\nabla\log\frac{\rho_{t}}{e^{\omega\cdot\Omega_{\rho_{t}}}} \rho_{t}\Big) \hspace{1mm}d\omega \\
&=\int_{\bbs^1}\nabla\Big(\nabla\log\frac{\rho_{t}}{e^{\omega\cdot\Omega_{\rho_{t}}}}\cdot\nabla(\omega\cdot
\Omega_{\rho_{t}})\Big)\cdot\nabla\log\frac{\rho_{t}}{e^{\omega\cdot\Omega_{\rho_{t}}}} \rho_{t} \hspace{1mm}d\omega 
\end{aligned}
\end{align*}
Combining the above computations, we have 
\begin{align*}
\begin{aligned}
&\frac{d}{dt}I(\rho_t | M_{\Omega_{\rho_t}})\\
&\quad=\int_{\bbs^1}\bigg(\nabla\Delta\log\frac{\rho_{t}}{e^{\omega\cdot\Omega_{\rho_{t}}}}\cdot\nabla\log\frac{\rho_{t}}{e^{\omega\cdot\Omega_{\rho_{t}}}}+\nabla\Big(\nabla\log\frac{\rho_{t}}{e^{\omega\cdot\Omega_{\rho_{t}}}}\cdot\nabla(\omega\cdot\Omega_{\rho_{t}})\Big)\cdot\nabla\log\frac{\rho_{t}}{e^{\omega\cdot\Omega_{\rho_{t}}}}\bigg) \rho_{t} \hspace{1mm}d\omega\\
&\qquad-\int_{\bbs^1}\bigg(\frac{\Delta\rho_{t}}{\rho_{t}}-\Delta(\omega\cdot\Omega_{\rho_{t}})-\frac{\nabla(\omega\cdot\Omega_{\rho_{t}})\cdot\nabla\rho_{t}}{\rho_{t}}\bigg)\partial_{t}\rho_{t} \hspace{1mm}d\omega\\
&\qquad+2\int_{\bbs^1}\Big(\Delta\rho_{t}-\Delta(\omega\cdot\Omega_{\rho_{t}})\rho_{t} -\nabla(\omega\cdot\Omega_{\rho_{t}})\cdot\nabla\rho_{t}\Big)\partial_{t}(\omega\cdot\Omega_{\rho_{t}}) \hspace{1mm}d\omega\\
&\quad=:J_1+J_2+J_3.
\end{aligned}
\end{align*}

We first use \eqref{laplog} and \eqref{main-0} to get
\begin{align*}
\begin{aligned}
J_2 &= -\int_{\bbs^1}\bigg(\Delta\log\frac{\rho_{t}}{e^{\omega \cdot \Omega_{\rho_{t}}}}+|\nabla\log\frac{\rho_{t}}{e^{\omega \cdot \Omega_{\rho_{t}}}}|^{2}+\nabla (\omega \cdot \Omega_{\rho_{t}})\cdot\nabla\log\frac{\rho_{t}}{e^{\omega \cdot \Omega_{\rho_{t}}}}\bigg)\partial_{t}\rho_{t} \hspace{1mm}d\omega\\
&=\int_{\bbs^1}\nabla\bigg(\Delta\log\frac{\rho_{t}}{e^{\omega \cdot \Omega_{\rho_{t}}}}+|\nabla\log\frac{\rho_{t}}{e^{\omega \cdot \Omega_{\rho_{t}}}}|^{2}+\nabla (\omega \cdot \Omega_{\rho_{t}})\cdot\nabla\log\frac{\rho_{t}}{e^{\omega \cdot \Omega_{\rho_{t}}}}\bigg)\cdot\nabla\log\frac{\rho_{t}}{e^{\omega\cdot\Omega_{\rho_{t}}}}\rho_t \hspace{1mm}d\omega\\
&=\int_{\bbs^1}\nabla\Delta\log\frac{\rho_{t}}{e^{\omega \cdot \Omega_{\rho_{t}}}}\cdot\nabla\log\frac{\rho_{t}}{e^{\omega\cdot\Omega_{\rho_{t}}}}\rho_t\,d\omega
+\int_{\bbs^1}\nabla|\nabla\log\frac{\rho_{t}}{e^{\omega \cdot \Omega_{\rho_{t}}}}|^{2}\cdot\nabla\log\frac{\rho_{t}}{e^{\omega\cdot\Omega_{\rho_{t}}}}\rho_t\,d\omega\\
&\quad+\int_{\bbs^1}\nabla\Big(\nabla (\omega \cdot \Omega_{\rho_{t}})\cdot\nabla\log\frac{\rho_{t}}{e^{\omega \cdot \Omega_{\rho_{t}}}}\Big)\cdot\nabla\log\frac{\rho_{t}}{e^{\omega\cdot\Omega_{\rho_{t}}}}\rho_t\,d\omega
\end{aligned}
\end{align*}
Then, we combine $J_2$ with $J_1$ as
\begin{align*}
\begin{aligned}
&J_1+J_2\\
& =2\int_{\bbs^1}\nabla\Delta\log\frac{\rho_{t}}{e^{\omega \cdot \Omega_{\rho_{t}}}}\cdot\nabla\log\frac{\rho_{t}}{e^{\omega\cdot\Omega_{\rho_{t}}}}\rho_t\,d\omega
+\int_{\bbs^1}\nabla|\nabla\log\frac{\rho_{t}}{e^{\omega \cdot \Omega_{\rho_{t}}}}|^{2}\cdot\nabla\log\frac{\rho_{t}}{e^{\omega\cdot\Omega_{\rho_{t}}}}\rho_t\,d\omega\\
&\quad+2\int_{\bbs^1}\nabla\Big(\nabla (\omega \cdot \Omega_{\rho_{t}})\cdot\nabla\log\frac{\rho_{t}}{e^{\omega \cdot \Omega_{\rho_{t}}}}\Big)\cdot\nabla\log\frac{\rho_{t}}{e^{\omega\cdot\Omega_{\rho_{t}}}}\rho_t\,d\omega\\
& =2\int_{\bbs^1}\nabla\Delta\log\frac{\rho_{t}}{e^{\omega \cdot \Omega_{\rho_{t}}}}\cdot\nabla\log\frac{\rho_{t}}{e^{\omega\cdot\Omega_{\rho_{t}}}}\rho_t\,d\omega
+\int_{\bbs^1}\nabla|\nabla\log\frac{\rho_{t}}{e^{\omega \cdot \Omega_{\rho_{t}}}}|^{2}\cdot\Big(\nabla\rho_{t} -\nabla(\omega\cdot\Omega_{\rho_t}) \rho_t\Big)\,d\omega\\
&\quad+2\int_{\bbs^1}\nabla\Big(\nabla (\omega \cdot \Omega_{\rho_{t}})\cdot\nabla\log\frac{\rho_{t}}{e^{\omega \cdot \Omega_{\rho_{t}}}}\Big)\cdot\nabla\log\frac{\rho_{t}}{e^{\omega\cdot\Omega_{\rho_{t}}}}\rho_t\,d\omega\\
& =2\int_{\bbs^1}\Big[\nabla\Delta\log\frac{\rho_{t}}{e^{\omega \cdot \Omega_{\rho_{t}}}}\cdot\nabla\log\frac{\rho_{t}}{e^{\omega\cdot\Omega_{\rho_{t}}}}-\frac{1}{2}\Delta|\nabla\log\frac{\rho_{t}}{e^{\omega \cdot \Omega_{\rho_{t}}}}|^{2}\Big]\rho_{t}\,d\omega\\
&\quad+2\int_{\bbs^1}\Big[\nabla\Big(\nabla (\omega \cdot \Omega_{\rho_{t}})\cdot\nabla\log\frac{\rho_{t}}{e^{\omega \cdot \Omega_{\rho_{t}}}}\Big)\cdot\nabla\log\frac{\rho_{t}}{e^{\omega\cdot\Omega_{\rho_{t}}}}
-\frac{1}{2}\nabla|\nabla\log\frac{\rho_{t}}{e^{\omega \cdot \Omega_{\rho_{t}}}}|^{2}\cdot\nabla(\omega\cdot\Omega_{\rho_t})\Big]
\rho_t\,d\omega\\
&=-2\int \mbox{tr}\Big(\Big[D^{2}\log\frac{\rho_{t}}{e^{\omega\cdot\Omega_{\rho_{t}}}}\Big]^{T}D^{2}\log\frac{\rho_{t}}{e^{\omega\cdot\Omega_{\rho_{t}}}}\Big)\rho_{t} \hspace{1mm}d\omega\\
&\quad+2\int\nabla\log\frac{\rho_{t}}{e^{\omega\cdot\Omega_{\rho_{t}}}}D^{2}(\omega\cdot\Omega_{\rho_{t}})\nabla\log\frac{\rho_{t}}{e^{\omega\cdot\Omega_{\rho_{t}}}}\rho_{t} \hspace{1mm}d\omega\\
\end{aligned}
\end{align*}
where we have used the Bochner formula on $\bbs^1$ to get the first integral in the last inequality.\\
On the other hand, by integration by parts, we get
\begin{align*}
\begin{aligned}
J_3&=2\int_{\bbs^1}\Delta\rho_{t}\partial_{t}(\omega\cdot\Omega_{\rho_{t}}) \hspace{1mm}d\omega+2\int_{\bbs^1}\nabla(\omega\cdot\Omega_{\rho_{t}})\cdot\nabla\rho_{t}\partial_{t}(\omega\cdot\Omega_{\rho_{t}}) \hspace{1mm}d\omega\\
&\quad+2\int_{\bbs^1}\nabla(\omega\cdot\Omega_{\rho_{t}})\cdot\nabla\partial_{t}(\omega\cdot\Omega_{\rho_{t}})\rho_{t} \hspace{1mm}d\omega-2\int_{\bbs^1}\nabla(\omega\cdot\Omega_{\rho_{t}})\cdot\nabla\rho_{t}\partial_{t}(\omega\cdot\Omega_{\rho_{t}}) \hspace{1mm}d\omega\\
&=2\int_{\bbs^1}\Delta\rho_{t}\partial_{t}(\omega\cdot\Omega_{\rho_{t}}) \hspace{1mm}d\omega
+2\int_{\bbs^1}\nabla(\omega\cdot\Omega_{\rho_{t}})\cdot\nabla\partial_{t}(\omega\cdot\Omega_{\rho_{t}})\rho_{t} \hspace{1mm}d\omega\\
&=-2\int_{\bbs^1}\nabla\rho_{t}\cdot\nabla(\omega\cdot\partial_{t}\Omega_{\rho_{t}}) \hspace{1mm}d\omega
+2\int_{\bbs^1}\nabla(\omega\cdot\Omega_{\rho_{t}})\cdot\nabla(\omega\cdot\partial_{t}\Omega_{\rho_{t}})\rho_{t} \hspace{1mm}d\omega\\
&=-2\int_{\bbs^1}\bbp_{\omega^{\perp}}\partial_{t}\Omega_{\rho_t}\cdot\nabla\log\frac{\rho_{t}}{e^{\omega\cdot\Omega_{\rho_{t}}}}\rho_t\hspace{1mm}d\omega\\
&=-2\partial_{t}\Omega_{\rho_t}\cdot\int_{\bbs^1}\nabla\log\frac{\rho_{t}}{e^{\omega\cdot\Omega_{\rho_{t}}}}\rho_t\hspace{1mm}d\omega,
\end{aligned}
\end{align*}
where we have used \eqref{formula-1} and \eqref{formula-2}. \\
Then, we use \eqref{main-0} and \eqref{formula-1}-\eqref{formula-2} to compute $\partial_{t}\Omega_{\rho_t}$ as
\begin{align}\label{angular}
\begin{aligned}
\partial_{t}\Omega_{\rho_{t}}&=\frac{\partial_{t}J_{\rho_{t}}}{|J_{\rho_{t}}|}-\frac{J_{\rho_{t}}}{|J_{\rho_{t}}|^{2}}\frac{J_{\rho_{t}}\cdot\partial_{t}J_{\rho_{t}}}{|J_{\rho_{t}}|}=\frac{\partial_{t}J_{\rho_{t}}}{|J_{\rho_{t}}|}-\frac{\Omega_{\rho_{t}}}{|J_{\rho_{t}}|}\int_{\bbs^1}\omega\cdot\Omega_{\rho_{t}}\partial_{t}\rho_{t} \hspace{1mm}d\omega\\
&=\frac{\partial_{t}J_{\rho_{t}}}{|J_{\rho_{t}}|}+\frac{\Omega_{\rho_{t}}}{|J_{\rho_{t}}|}\int_{\bbs^1}\nabla(\omega\cdot\Omega_{\rho_{t}})\cdot\nabla\log\frac{\rho_{t}}{e^{\omega\cdot\Omega_{\rho_{t}}}}\rho_{t} \hspace{1mm}d\omega\\
&=\frac{\partial_{t}J_{\rho_{t}}}{|J_{\rho_{t}}|}+\frac{\Omega_{\rho_{t}}}{|J_{\rho_{t}}|}\Omega_{\rho_{t}}\cdot\int_{\bbs^1}\nabla\log\frac{\rho_{t}}{e^{\omega\cdot\Omega_{\rho_{t}}}}\rho_{t} \hspace{1mm}d\omega.\\ 
\end{aligned}
\end{align}
Again, using \eqref{main-0} and \eqref{formula-1}-\eqref{formula-2}, we have
\begin{align*}
\begin{aligned}
J_3&=-\frac{2}{|J_{\rho_{t}}|}\int_{\bbs^1}\Big(\omega\cdot\int_{\bbs^1}\nabla_{\omega^{\prime}}\log\frac{\rho_{t}}{e^{\omega'\cdot\Omega_{\rho_{t}}}}\rho_{t} d\omega^{\prime}\Big)\partial_{t}\rho_{t} \hspace{1mm}d\omega
-\frac{2}{|J_{\rho_{t}}|}\Big(\Omega_{\rho_{t}}\cdot\int_{\bbs^1}\nabla\log\frac{\rho_{t}}{e^{\omega\cdot\Omega_{\rho_{t}}}}\rho_{t} \hspace{1mm}d\omega\Big)^{2}\\
&=\frac{2}{|J_{\rho_{t}}|}\int_{\bbs^1}\nabla_{\omega}\Big(\omega\cdot\int_{\bbs^1}\nabla_{\omega^{\prime}}\log\frac{\rho_{t}}{e^{\omega^{\prime}\cdot\Omega_{\rho_{t}}}}\rho_{t} d\omega^{\prime}\Big)\cdot\nabla_{\omega}\log\frac{\rho_{t}}{e^{\omega\cdot\Omega_{\rho_{t}}}}\rho_{t} d\omega\\
&\quad-\frac{2}{|J_{\rho_{t}}|}\Big(\Omega_{\rho_{t}}\cdot\int_{\bbs^1}\nabla\log\frac{\rho_{t}}{e^{\omega\cdot\Omega_{\rho_{t}}}}\rho_{t} \hspace{1mm}d\omega\Big)^{2}\\
&=\frac{2}{|J_{\rho_{t}}|}\int_{\bbs^1}\bbp_{\omega^{\perp}}\int_{\bbs^1}\nabla_{\omega^{\prime}}\log\frac{\rho_{t}}{e^{\omega^{\prime}\cdot\Omega_{\rho_{t}}}}\rho_{t} d\omega^{\prime}\cdot\nabla_{\omega}\log\frac{\rho_{t}}{e^{\omega\cdot\Omega_{\rho_{t}}}}\rho_{t} d\omega\\
&\quad-\frac{2}{|J_{\rho_{t}}|}\Big(\Omega_{\rho_{t}}\cdot\int_{\bbs^1}\nabla\log\frac{\rho_{t}}{e^{\omega\cdot\Omega_{\rho_{t}}}}\rho_{t} \hspace{1mm}d\omega\Big)^{2}\\
&=\frac{2}{|J_{\rho_{t}}|}\bigg(\bigg|\int_{\bbs^1}\nabla\log\frac{\rho_{t}}{e^{\omega\cdot\Omega_{\rho_{t}}}}\rho_{t} d\omega\bigg|^{2}-\Big(\int_{\bbs^1}\Omega_{\rho_{t}}\cdot\nabla\log\frac{\rho_{t}}{e^{\omega\cdot\Omega_{\rho_{t}}}}\rho_{t} d\omega\Big)^{2}\bigg),
\end{aligned}
\end{align*}
where in the second equality, we have used  \eqref{formula-1} and \eqref{formula-2}.\\
Hence we complete \eqref{claim-0}.
\end{proof}

\subsection{Proof of \eqref{diss-small}} 
For the notational simplification, for any fixed constant $\eps_*$ satisfying $0<\varepsilon_{*}\le \frac{1}{10}$,
\[
C_{*}:= \frac{2\pi^{2}e^{2(1+|\log C_{M}|)}(1+\frac{1}{15}\varepsilon_{*})}{1-\frac{7}{6}\varepsilon_{*}}, 
\]
denotes the coefficient of the logarithmic Sobolev inequality in Lemma \ref{lem-LSI} when $\lambda=1+|\log C_{M}|$ because
of $\Psi=-\omega\cdot\Omega-\log C_M$ in our case.  Moreover, we set
\beq\label{T-star}
T_*:=2H(\rho_0 | M_{\Omega_{\rho_0}})\Big[\min\Big(C_M^2e^{-2}\eps_*^2, \frac{L}{2C_* }C_M^2e^{-2}\eps_*^2, m  C_*^{-1} \Big)\Big]^{-1},
\eeq
where $L:=(2+\frac{4}{m})^{-1}\log 2$, and $m:=\Big|\int_{\bbs^{1}}\omega M_{\Omega_{\rho}} \,d\omega\Big|.$\\
Since $H(\rho_t | M_{\Omega_{\rho_t}})\ge 0$ for all $t$ by Jensen's inequality, \eqref{ineq-1} yields that for any $t>0$,
\begin{align}
\begin{aligned}\label{cheby}
H(\rho_0 | M_{\Omega_{\rho_0}})&\ge H(\rho_0 | M_{\Omega_{\rho_0}})-H(\rho_t | M_{\Omega_{\rho_t}})=-\int_{0}^{t}\frac{d}{ds}H(\rho_s | M_{\Omega_{\rho_s}})ds\\
&=\int_{0}^{t} I(\rho_s | M_{\Omega_{\rho_s}})ds \ge t\min_{s\in[0,t]} I(\rho_s | M_{\Omega_{\rho_s}}).
\end{aligned}
\end{align}
(We remark that Lemma \ref{lem-DC} implies that the dissipation is lower semicontinuous
as a function of time and  consequently it achieves a minimum on any finite closed interval.)\\
Thus, there exists $T_0\in [0, T_*]$ such that
\beq\label{I-est}
I(\rho_{T_0} | M_{\Omega_{\rho_{T_0}}}) \le \min_{s\in[0,T_*]} I(\rho_s | M_{\Omega_{\rho_s}})\le \frac{H(\rho_0 | M_{\Omega_{\rho_0}})}{T_*}\le  \frac{1}{2} C_M^2e^{-2}\eps_*^2.
\eeq
Therefore, it follows from \eqref{Sobo} that 
\[
\|\rho_{T_0} M_{\Omega_{\rho_{T_0}}}^{-1} -1\|_{L^{\infty}(\bbs^1)}\le \eps_*,
\]
which together with Lemma \ref{lem-LSI} implies that
\[
H(\rho_{T_0} | M_{\Omega_{\rho_{T_0}}})\le C_* I(\rho_{T_0} | M_{\Omega_{\rho_{T_0}}}).
\]
Then, it follows from \eqref{T-star} and \eqref{I-est} that
\beq\label{H-est-0}
H(\rho_{T_0} | M_{\Omega_{\rho_{T_0}}})\le C_* \frac{H(\rho_0 | M_{\Omega_{\rho_0}})}{T_*}\le \frac{m}{2}.
\eeq
In particular, for all $t\ge T_0$,
\[
H(\rho_{t} | M_{\Omega_{\rho_{t}}})\le \frac{m}{2}.
\]
Then, using Csiszar-Kullback-Pinsker inequality (see for example \cite[Theorem 1.4]{entropy-inequality}), we have that for all $t\ge T_0$,
\begin{align}
\begin{aligned}\label{CKP}
|J_{\rho_t}|
&
\geq \bigg|\int_{\bbs^{1}}\omega\cdot M_{\Omega_{\rho_t}} \,d\omega\bigg|-\bigg| J_{\rho_t}-\int_{\bbs^{1}}\omega\cdot M_{\Omega_{\rho_t}} \,d\omega \bigg|
\\
&=m- \bigg|\int_{\bbs^{1}}\omega\,(\rho_t-M_{\Omega_{\rho_t}})\,d\omega\bigg| \ge m-\|\rho_t-M_{\Omega_{\rho_t}}\|_{L^{1}(\mathbb{S}^{1})}\ge \frac{m}{2},
\end{aligned}
\end{align}
which together with Lemma \ref{lem-DC} implies that for any $t\ge s\ge T_0$,
\beq\label{I-e}
I(\rho_t | M_{\Omega_{\rho_t}})\leq I(\rho_s | M_{\Omega_{\rho_s}}) e^{(2+\frac{4}{m})(t-s)}.
\eeq
Therefore, using \eqref{I-est}, we have that for any $t\in [T_0,T_0+L]$ (recall $L=(2+\frac{4}{m})^{-1}\log 2$),
\beq\label{sub-1}
I(\rho_t | M_{\Omega_{\rho_t}})\leq I(\rho_{T_0} | M_{\Omega_{\rho_{T_0}}}) e^{(2+\frac{4}{m})L} = 2 I(\rho_{T_0} | M_{\Omega_{\rho_{T_0}}})\le C_M^2e^{-2}\eps_*^2.
\eeq
Now, we extend the time length, on which the above inequality still holds. Using the same estimates as \eqref{cheby}, we have
\begin{align*}
\begin{aligned}
H(\rho_{T_0} | M_{\Omega_{\rho_{T_0}}}) &\ge H(\rho_{T_0+L/2} | M_{\Omega_{\rho_{T_0+L/2}}})\ge\int_{T_0+L/2}^{T_0+L} I(\rho_s | M_{\Omega_{\rho_s}})ds\\
& \ge \frac{L}{2}\min_{s\in[T_0+L/2,T_0+L]} I(\rho_s | M_{\Omega_{\rho_s}}).
\end{aligned}
\end{align*}
Then, thanks to \eqref{T-star} and \eqref{H-est-0}, there exists $s_*\in [T_0+L/2,T_0+L]$ such that
\[
I(\rho_{s_*} | M_{\Omega_{\rho_{s_*}}}) \le \min_{s\in[T_0+L/2,T_0+L]} I(\rho_s | M_{\Omega_{\rho_s}})\le  \frac{2}{L}\frac{C_*H(\rho_0 | M_{\Omega_{\rho_0}})}{T_*}\le \frac{1}{2\pi}C_M^2e^{-2}\eps_*^2.
\]
Thus, the inequality \eqref{I-e} yields that for any $t\in [s_*,s_*+L]$,
\[
I(\rho_t | M_{\Omega_{\rho_t}})\leq I(\rho_{s_*} | M_{\Omega_{\rho_{s_*}}}) e^{(2+\frac{4}{m})L} = 2 I(\rho_{s_*} | M_{\Omega_{\rho_{s_*}}})\le C_M^2e^{-2}\eps_*^2,
\]
which together with \eqref{sub-1} implies that 
\[
I(\rho_t | M_{\Omega_{\rho_t}}) \le C_M^2e^{-2}\eps_*^2,\quad \forall t\in [T_0,T_0+\frac{3}{2}L].
\]
We use again the above arguments to make the above inequality hold on the interval $[T_0,T_0+2L]$, therefore we repeat the same arguments to eventually complete \eqref{diss-small}.

\subsection{Proof of \eqref{remark-ineq}}
First of all, it follows from \eqref{Sobo} and \eqref{diss-small} that
\[
\|\rho M_{\Omega_{\rho}}^{-1} -1\|_{L^{\infty}(\bbs^1)}\le  \eps_*,\quad \forall t\ge T_0.
\]
Then, Lemma \ref{lem-LSI} and \eqref{ineq-1} imply that
\begin{equation}\label{sob3}
H(\rho_{t}|M_{\Omega_{\rho_{t}}})\leq C_* I(\rho_{t}|M_{\Omega_{\rho_{t}}}),\quad \forall t\ge T_0.
\end{equation}
Now, using \eqref{J-positive}
and Lemma \ref{lem-DC}, we have that for any $0<t<T_0$,
\begin{multline*}
H(\rho_{t}|M_{\Omega_{\rho_{t}}})-H(\rho_{T_{0}}|M_{\Omega_{\rho_{T_{0}}}})=-\int_{t}^{T_{0}}\frac{d}{ds}H(\rho_{s}|M_{\Omega_{\rho_{s}}})ds=\int_{t}^{T_{0}}I(\rho_{s}|M_{\Omega_{\rho_{s}}})ds\\
\leq I(\rho_{t}|M_{\Omega_{\rho_{t}}})\int_{t}^{T_{0}}{\rm \exp}\bigg[\bigg(2+\frac{2}{|J_{\rho_{0}}|e^{-T_{0}}}\bigg)s\bigg]ds \\
=I(\rho_{t}|M_{\Omega_{\rho_{t}}})\underbrace{\frac{|J_{\rho_0}|e^{-T_{0}}}{2|J_{\rho_0}|e^{-T_{0}}+2}\bigg({\rm \exp}\Big[\Big(2+\frac{2}{|J_{\rho_{0}}|e^{-T_{0}}}\bigg)T_{0}\Big]-{\rm \exp}\Big[\Big(2+\frac{2}{|J_{\rho_{0}}|e^{-T_{0}}}\Big)t\Big]\bigg)}_{=:\beta(t)}.
\end{multline*}
Thus, \eqref{sob3} and Lemma \ref{lem-DC} yield that for any $0<t<T_0$,
\begin{align*}
\begin{aligned}H(\rho_{t}|M_{\Omega_{\rho_{t}}}) & \leq I(\rho_{t}|M_{\Omega_{\rho_{t}}})\beta(t)+H(\rho_{T_{0}}|M_{\Omega_{\rho_{T_{0}}}})\\
 & \leq I(\rho_{t}|M_{\Omega_{\rho_{t}}})\beta(t)+C_*I(\rho_{T_{0}}|M_{\Omega_{\rho_{T_{0}}}})\\
 & \leq\bigg(\beta(t)+C_*\exp\bigg[\bigg(2+\frac{2}{|J_{\rho_{0}}|e^{-T_{0}}}\bigg)(T_{0}-t)\bigg]\bigg)I(\rho_{t}|M_{\Omega_{\rho_{t}}}),
\end{aligned}
\end{align*}
which together with \eqref{sob3} implies that
\[
H(\rho_{t}|M_{\Omega_{\rho_{t}}})\leq\frac{1}{B(t)}I(\rho_{t}|M_{\Omega_{\rho_{t}}}),\hspace{1em}\forall{t>0},
\]
where
\begin{multline}\label{B(t)}
B(t):=\bigg(\frac{|J_{\rho_0}|e^{-T_{0}}}{2|J_{\rho_0}|e^{-T_{0}}+2}\bigg({\rm \exp}\bigg[\bigg(2+\frac{2}{|J_{\rho_{0}}|e^{-T_{0}}}\bigg)T_{0}\bigg]-{\rm \exp}\bigg[\bigg(2+\frac{2}{|J_{\rho_{0}}|e^{-T_{0}}}\bigg)t\bigg]\bigg)_{+}\\
+C_* \exp\bigg[\bigg(2+\frac{2}{|J_{\rho_{0}}|e^{-T_{0}}}\bigg)(T_{0}-t)_{+}\bigg]\bigg)^{-1}.
\end{multline}
Therefore, it follows from \eqref{ineq-1} that
\[
\frac{d}{dt}H(\rho_{t}|M_{\Omega_{\rho_t}})\le -B(t)H(\rho_{t}|M_{\Omega_{\rho_t}}),
\]
which implies that
\[
H(\rho_{t}|M_{\Omega_{\rho_t}})\leq H(\rho_{0}|M_{\Omega_{\rho_{0}}})e^{-\int_{0}^{t} B(s)ds}.
\]
Then, the Csiszar-Kullback-Pinsker inequality yields that
\beq\label{con-1}
\|\rho_t - M_{\Omega_{\rho_t}}\|_{L^1(\bbs^1)}\le H(\rho_{0}|M_{\Omega_{\rho_{0}}})e^{-\int_{0}^{t} B(s)ds},  \quad \forall t\ge 0.
\eeq
Following the same estimates as in proof of \cite[Theorem 2.2]{F-K-M}, there exists a constant $C>0$ such that
\[
\Big|\frac{d}{dt} J_{\rho_t}\Big|\le C \|\rho_t - M_{\Omega_{\rho_t}}\|_{L^1(\bbs^{1})},
\]
therefore, 
\[
\Big|\frac{d}{dt} J_{\rho_t}\Big|\le C H(\rho_{0}|M_{\Omega_{\rho_{0}}}) e^{-\int_{0}^{t} B(s)ds}.
\]
Notice that since $e^{-\int_{0}^{t} B(s)ds}\to 0$ as $t\to\infty$, and thanks to \eqref{J-positive}, there exists a constant vector $J_{\infty}$ such that $|J_{\infty}|\ge \frac{m}{2}$ and
\begin{align*}
\begin{aligned}
|J_{\rho_t} -  J_{\infty}| &\le\int_{t}^{\infty} \bigg|\frac{d}{dr} J_{\rho_r}\bigg|dr \\
&\le C H(\rho_{0}|M_{\Omega_{\rho_{0}}})\int_{t}^{\infty}e^{-\int_{0}^{r}B(s)ds}dr.
\end{aligned}
\end{align*}
Then, setting $\Omega_{\infty}=\frac{J_{\infty}}{|J_{\infty}|}$, we have
\begin{align*}
\begin{aligned}
|\Omega_{\rho_{t}}-\Omega_{\infty}|&\le\frac{2|J_{\rho_t} -  J_{\infty}|}{|J_{\infty}|}\le \frac{4}{m}|J_{\rho_t} -  J_{\infty}|\\
&\le C\frac{4}{m}H(\rho_{0}|M_{\Omega_{\rho_{0}}})\int_{t}^{\infty}e^{-\int_{0}^{r}B(s)ds}dr.
\end{aligned}
\end{align*}
Let $\gamma_{t}\hspace{1mm}:\hspace{1mm}[0,1]\rightarrow\mathbb{R}^{2}$ be a curve defined by 
\[
\gamma_{t}(s)=(1-s)\Omega_{\rho_{t}}+s\Omega_{\infty}.
\]
Then, we have
\begin{align*}
\begin{aligned}
\|M_{\Omega_{\rho_{t}}}-
M_{\Omega_{\infty}}\|_{L^{1}(\mathbb{S}^{1})}&=C_{M}\int_{\mathbb{S}^{1}}|e^{\omega\cdot\Omega_{\rho_{t}}}
-e^{\omega\cdot\Omega_{\infty}}|d\omega\leq C_{M}\int_{\mathbb{S}^{1}}\int_{0}^{1}\bigg|\frac{d}{ds}
e^{\omega\cdot\gamma_{t}(s)}\bigg|ds \hspace{1mm}d\omega\\
&\leq C_{M}|\Omega_{\rho_{t}}
-\Omega_{\infty}|\int_{\mathbb{S}^{1}}\int_{0}^{1}
e^{\omega\cdot\gamma_{t}(s)}dsd\omega\leq2\pi e^2C_{M}|\Omega_{\rho_{t}}-\Omega_{\infty}|\\
&\le \frac{8\pi^{2}e^2CC_{M}}{m}H(\rho_{0}|M_{\Omega_{\rho_{0}}})\int_{t}^{\infty}e^{-\int_{0}^{r}B(s)ds}dr.
\end{aligned}
\end{align*}
Hence, we combine the above estimate with \eqref{con-1}, to get
\begin{align}
\begin{aligned}\label{main-ineq}
\|\rho_{t}-M_{\Omega_{\infty}}\|_{L^{1}(\mathbb{S}^{1})}
&\leq\|\rho_{t}-M_{\Omega_{\rho_{t}}}\|_{L^{1}(\mathbb{S}^{1})}
+\|M_{\Omega_{\rho_{t}}}-M_{\Omega_{\infty}}\|_{L^{1}(\mathbb{S}^{1})}\\
&\le H(\rho_{0}|M_{\Omega_{\rho_{0}}})\bigg(e^{-\int_{0}^{t} B(s)ds}+\frac{8\pi^{2}e^2CC_{M}}{m}\int_{t}^{\infty}e^{-\int_{0}^{r}B(s)ds}dr \bigg)\\
&\le \Big(\int_{\bbs^1}\rho_0\log\rho_0\,d\omega +1-\log C_{M} \Big) \\
&\quad\times \bigg(e^{-\int_{0}^{t} B(s)ds}+\frac{8\pi^{2}e^2CC_{M}}{m}\int_{t}^{\infty}e^{-\int_{0}^{r}B(s)ds}dr \bigg).
\end{aligned}
\end{align}

\subsection{Conclusion}
We now use \eqref{main-ineq} to show the three results 1)-3) in Theorem \ref{thm-converge}.\\
Since the function $B(t)$ is positive and non-decreasing, \eqref{main-ineq} implies that 
\begin{align*}
\begin{aligned}
\|\rho_{t}-M_{\Omega_{\infty}}\|_{L^{1}(\mathbb{S}^{1})}
&\le  \Big(\int_{\bbs^1}\rho_0\log\rho_0\,d\omega +1-\log C_{M} \Big) \bigg(e^{-B(0)t}+C\int_{t}^{\infty}e^{-B(0)r}dr \bigg)\\
&\le C \Big(\int_{\bbs^1}\rho_0\log\rho_0\,d\omega +1-\log C_{M} \Big)  e^{-B(0)t},
\end{aligned}
\end{align*}
where it follows from \eqref{B(t)} that
\[
B(0)=\bigg(\frac{|J_{\rho_0}|e^{-T_{0}}}{2|J_{\rho_0}|e^{-T_{0}}+2} \bigg( {\rm \exp}\bigg[\bigg(2+\frac{2}{|J_{\rho_{0}}|e^{-T_{0}}}\bigg)T_{0}\bigg] -1\bigg) + C_*{\rm \exp}\bigg[\bigg(2+\frac{2}{|J_{\rho_{0}}|e^{-T_{0}}}\bigg)T_{0}\bigg]   \bigg)^{-1}.
\] 
On the other hand, using \eqref{main-ineq} together with the fact that $B(t)=C_*^{-1}$ for all $t\ge T_0$, we have that
\[
\|\rho_{t}-M_{\Omega_{\infty}}\|_{L^{1}(\mathbb{S}^{1})} \le C \Big(\int_{\bbs^1}\rho_0\log\rho_0\,d\omega +1-\log C_{M} \Big) e^{-C_*^{-1}(t-T_0)},\quad \forall t\ge T_0.
\]
In order to show the last result, we observe that $C_*(\eps_*)\to 2\pi^{2}e^{2(1+|\log C_{M}|)}$ as $\eps_*\to 0$. Thus, for any $\eps>0$, there exists $\tilde\eps_*$ such that 
\[
C_*^{-1}(\tilde\eps_*)\ge \frac{1}{2\pi^{2}e^{2(1+|\log C_{M}|)}}-\frac{\eps}{2}.
\]
For such $\tilde\eps_*$, since 
\[
T_0 \le 2H(\rho_0 | M_{\Omega_{\rho_0}})\Big[\min\Big(C_M^2e^{-2}\tilde \eps_*^2, \frac{L}{2C_*(\tilde\eps_*) }C_M^2e^{-2}\tilde\eps_*^2, m  C_*^{-1}(\tilde\eps_*) \Big)\Big]^{-1},
\]
if $H(\rho_0 |M_{\Omega_{\rho_0}})\to 0$, then $T_0\to 0$ and $ |J_{\rho_0}| \to m$, consequently $B(0)\to C_*^{-1}(\tilde\eps_*) $.  \\
Thus, there exists $\delta>0$ such that if $H(\rho_0 |M_{\Omega_{\rho_0}})<\delta$, then
\[
B(0)\ge C_*^{-1}(\tilde\eps_*)-\frac{\eps}{2}.
\]
Therefore, we have
\begin{align*}
\begin{aligned}
\|\rho_{t}-M_{\Omega_{\infty}}\|_{L^{1}(\mathbb{S}^{1})}
&\le C \Big(\int_{\bbs^1}\rho_0\log\rho_0\,d\omega +1-\log C_{M} \Big)  e^{-B(0)t}\\
&\le C \Big(\int_{\bbs^1}\rho_0\log\rho_0\,d\omega +1-\log C_{M} \Big) \exp{\Big[-\Big(\frac{1}{2\pi^{2}e^{2(1+|\log C_{M}|)}}-\eps\Big)t\Big]}.
\end{aligned}
\end{align*}
Hence we complete the proof.

\appendix

\begin{section}{Laplacian log identity} 
\setcounter{equation}{0}
\begin{lemma} Assume $\rho$ and  $\Psi$ are functions in $\mathcal{C}^{2}(\mathbb{S}^{1})$, and $\rho>0$. Then, the following identity holds:
\begin{equation}\label{laplog}
\frac{\Delta\rho}{\rho}+\frac{\nabla \Psi\cdot\nabla\rho}{\rho}+\Delta \Psi=\Delta\log\frac{\rho}{e^{-\Psi}}+\Big|\nabla\log\frac{\rho}{e^{-\Psi}}\Big|^{2}-\nabla \Psi\cdot\nabla\log\frac{\rho}{e^{-\Psi}},
\end{equation}
\begin{proof}
We begin by computing the first term in r.h.s of \eqref{laplog} as
\begin{align*}
\begin{aligned}
\Delta\log\frac{\rho}{e^{-\Psi}}=\mbox{div}\frac{\nabla\frac{\rho}{e^{-\Psi}}}{\frac{\rho}{e^{-\Psi}}}=-\frac{\bigg|\nabla\frac{\rho}{e^{-\Psi}}\bigg|^{2}}{\bigg(\frac{\rho}{e^{-\Psi}}\bigg)^{2}}+\frac{\Delta\frac{\rho}{e^{-\Psi}}}{\frac{\rho}{e^{-\Psi}}}=:I_1+I_2. 
\end{aligned} 
\end{align*}
By straightforward computations, we have 
\[
I_1=-\Big|\nabla\log\frac{\rho}{e^{-\Psi}}\Big|^{2},
\]
and
\begin{align*}
\begin{aligned}
I_2&=\frac{\mbox{div}\bigg(\frac{\nabla\rho}{e^{-\Psi}}+\frac{\rho}{e^{-\Psi}}\nabla \Psi\bigg)}{\frac{\rho}{e^{-\Psi}}}\\
&=\frac{\Delta\rho}{\rho}+2\frac{\nabla \Psi\cdot\nabla\rho}{\rho}+\Delta \Psi+|\nabla \Psi|^{2}\\
&=\frac{\Delta\rho}{\rho}+\frac{\nabla \Psi\cdot\nabla\rho}{\rho}+\Delta \Psi+\nabla \Psi\cdot\nabla\log\frac{\rho}{e^{-\Psi}}.
\end{aligned} 
\end{align*}
Hence, we have the desired identity.
\end{proof}
\end{lemma}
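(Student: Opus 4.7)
The identity is a pointwise statement on $\mathbb{S}^{1}$ that involves at most second derivatives of $\rho$ and $\Psi$; since $\rho>0$ and both functions are in $\mathcal{C}^{2}$, no analytic subtleties arise and the proof is a direct computation. My plan is to introduce the single scalar $u:=\log(\rho/e^{-\Psi})=\log\rho+\Psi$ and rewrite both sides of \eqref{laplog} in terms of $u$, $\Psi$, and elementary quantities.

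\textbf{Main steps.} First I would record the Riemannian chain-rule identities $\nabla u=\nabla\log\rho+\nabla\Psi$ and $\Delta u=\Delta\log\rho+\Delta\Psi$, together with the classical scalar formula $\Delta\log\rho=\Delta\rho/\rho-|\nabla\log\rho|^{2}$, which itself follows from $\Delta\log\rho=\mathrm{div}(\nabla\rho/\rho)$ and the quotient rule. Substituting yields $\Delta u=\Delta\rho/\rho+\Delta\Psi-|\nabla\log\rho|^{2}$. Next I would expand
\[
|\nabla u|^{2}=|\nabla\log\rho|^{2}+2\,\nabla\log\rho\cdot\nabla\Psi+|\nabla\Psi|^{2},\qquad \nabla\Psi\cdot\nabla u=\nabla\Psi\cdot\nabla\log\rho+|\nabla\Psi|^{2}.
\]
Adding and subtracting so that the $|\nabla\Psi|^{2}$ and one copy of the cross term cancel, one obtains
\[
\Delta u+|\nabla u|^{2}-\nabla\Psi\cdot\nabla u \;=\; \frac{\Delta\rho}{\rho}+\Delta\Psi+\frac{\nabla\Psi\cdot\nabla\rho}{\rho},
\]
which is precisely the left-hand side of \eqref{laplog}; the left-hand side of this display, with $u=\log(\rho/e^{-\Psi})$, is precisely the right-hand side of \eqref{laplog}.

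\textbf{Obstacles.} There is essentially no obstacle — the computation is purely algebraic once $\Delta\log f=\Delta f/f-|\nabla\log f|^{2}$ is in hand. The only bookkeeping point is the cancellation of the $|\nabla\Psi|^{2}$ produced by $|\nabla u|^{2}$ against the one from $\nabla\Psi\cdot\nabla u$, so I would perform those subtractions symbolically before expanding. An equivalent route (probably closer to what a referee would expect) is to compute $\Delta\log(\rho/e^{-\Psi})=\mathrm{div}\bigl(\nabla(\rho/e^{-\Psi})/(\rho/e^{-\Psi})\bigr)$ directly, using $\nabla(\rho/e^{-\Psi})=e^{\Psi}(\nabla\rho+\rho\nabla\Psi)$ and the quotient rule; this avoids introducing $u$ altogether and produces the identity in a few lines. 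Finally, since all operators are intrinsic and only the metric compatibility of the Levi-Civita connection is invoked, the identity in fact holds verbatim on any Riemannian manifold, so restricting to $\mathbb{S}^{1}$ plays no special role in the argument.
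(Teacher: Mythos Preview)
Your proposal is correct and is essentially the same direct computation as the paper's proof; the paper in fact takes exactly the ``equivalent route'' you describe at the end, expanding $\Delta\log(\rho/e^{-\Psi})=\mathrm{div}\bigl(\nabla(\rho/e^{-\Psi})/(\rho/e^{-\Psi})\bigr)$ via $\nabla(\rho/e^{-\Psi})=e^{\Psi}(\nabla\rho+\rho\nabla\Psi)$ and the quotient rule. Your primary route through $u=\log\rho+\Psi$ is just a cosmetic repackaging of the same cancellations.
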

\end{section}

\begin{section}{Uniqueness} 
\setcounter{equation}{0}
We here present another proof for the uniqueness of weak solutions to \eqref{main}, combining the short-time stability \eqref{stab} with the energy estimates to be specified below. First of all, we see that the weak solution $\rho$ to \eqref{main} becomes smooth instantaneously thanks to the parabolic regularity, therefore we have that $\rho\in L^{\infty}([t_0,\infty)\times \bbs^1)$ for any fixed $t_0>0$.\\
Let $\rho$ and $\bar\rho$ be any two weak solutions to \eqref{main}. Then, since 
\[
\partial_{t}(\rho-\bar\rho)= \Delta_{\omega}(\rho-\bar\rho)-\nabla_{\omega}\cdot\Big((\rho-\bar\rho)\, \bbp_{\omega^{\perp}}\Omega_{\rho}\Big)-\nabla_{\omega}\cdot\Big(\bar\rho\, \bbp_{\omega^{\perp}}(\Omega_{\rho}-\Omega_{\bar\rho})\Big),
\]
we have
\begin{align*}
\begin{aligned}
\frac{d}{dt}\int_{\bbs^1} \frac{|\rho-\bar\rho|^2}{2} d\omega+\int_{\bbs^1} |\nabla_{\omega}(\rho-\bar\rho)|^2 d\omega 
&=  \int_{\bbs^1} (\rho-\bar\rho)\Omega_{\rho} \cdot \nabla_{\omega}(\rho-\bar\rho) d\omega \\
&\quad +  \int_{\bbs^1} \bar\rho (\Omega_{\rho}-\Omega_{\bar\rho}) \cdot \nabla_{\omega}(\rho-\bar\rho) d\omega.
\end{aligned} 
\end{align*}
Since \eqref{J-positive} yields that
\[
|\Omega_{\rho}-\Omega_{\bar\rho}|\le \frac{2|J_{\rho}-J_{\bar\rho}|}{|J_{\rho}|}\le Ce^t |J_{\rho_0}|^{-1} \|\rho-\bar\rho\|_{L^2(\bbs^1)},
\]
we have
\[
\bigg|\int_{\bbs^1} \bar\rho (\Omega_{\rho}-\Omega_{\bar\rho}) \cdot \nabla_{\omega}(\rho-\bar\rho) d\omega \bigg| \le Ce^t |J_{\rho_0}|^{-1} \|\bar\rho\|_{L^{\infty}(\bbs^1)}\|\rho-\bar\rho\|_{L^2(\bbs^1)} \|\nabla_{\omega}(\rho-\bar\rho)\|_{L^2(\bbs^1)}. 
\]
Then, using $\rho\in L^{\infty}([t_0,\infty)\times \bbs^1)$ for all $t\ge t_0$, we have that for all $t\ge t_0$
\begin{align*}
\begin{aligned}
\frac{d}{dt}\int_{\bbs^1} |\rho-\bar\rho|^2 d\omega+\int_{\bbs^1} |\nabla_{\omega}(\rho-\bar\rho)|^2 d\omega 
&\le C(1+e^{2t}) \int_{\bbs^1} |\rho-\bar\rho|^2 d\omega,
\end{aligned} 
\end{align*}
which implies that 
\[
\int_{\bbs^1} |\rho-\bar\rho|^2 d\omega \le Ce^{e^{2t}} \int_{\bbs^1} |\rho_{t_0}-\bar\rho_{t_0}|^2 d\omega,\quad \forall t\ge t_0.
\]
Therefore, if $\rho_{t_0}=\bar\rho_{t_0}$, then $\rho_t=\bar\rho_t$ for all $t\ge t_0$. \\
Since it follows from the short-time stability \eqref{stab} that if $\rho_{0}=\bar\rho_{0}$, then $\rho_t=\bar\rho_t$ for all $t\le \delta$,
we take $t_0<\delta$ to complete the uniqueness.
\end{section}

\begin{section}{Formulas for Calculus on sphere} 
\setcounter{equation}{0}
We here present some useful formulas on $n$-dimensional sphere $\bbs^{n}$, which are extensively used in this paper.\\
Let $F$ be a vector-valued function and $f$ be scalar-valued function. Then we have the following formulas related to the integration by parts:
\beq\label{formula-0}
\int_{\bbs^{n}} f\nabla_{\omega}\cdot F d\omega =  -\int_{\bbs^{n}} F\cdot(\nabla_{\omega}f -2\omega f) d\omega.
\eeq
By the definition of the projection $\bbp_{\omega^{\perp}}$, it is obvious that
\beq\label{formula-1}
\bbp_{\omega^{\perp}}\nabla_{\omega} f =\nabla_{\omega} f 
\eeq
for any scalar-valued function $f$.\\
Moreover, for any constant vector $v\in\bbr^{n+1}$, we have
\begin{align}
\begin{aligned}\label{formula-2}
\nabla_{\omega}(\omega\cdot v) = \bbp_{\omega^{\perp}} v.
\end{aligned}
\end{align}
 \end{section}

\bibliographystyle{amsplain}
\bibliography{Vicsek-dim1-submit}

\end{document}